 \DeclareMathOperator{\Av}{Av}
\newtheorem{theorem}{Theorem}[section]
\newtheorem{corollary}{Corollary}[section]
\newtheorem{lemma}{Lemma}[section]
\theoremstyle{definition}
\newtheorem{remark}{Remark}[section]
\def\+{\oplus}
\renewcommand{\S}{\mathcal S}
\def\00{{\bf 0}}
\def\11{{\bf 1}}
\def\+{\oplus}
\def\\{\cr}
\def\({\left(}
\def\){\right)}
\newcommand{\C}{\mathcal{C}}
\renewcommand{\c}{c}
\providecommand{\newoperator}[3]{%
  \newcommand*{#1}{\mathop{#2}#3}}
\newoperator{\FD}{\mathrm{FD}}{\nolimits}
\title{Powers of permutations that avoid chains of patterns}
\author[1]{Kassie Archer}
\author[1]{Aaron Geary}
\affil[1]{\small{Department of Mathematics, United States Naval Academy, Annapolis, MD, 21402, 

Email: karcher@usna.edu, geary@usna.edu}}
\date{}
\begin{document}

\maketitle
\begin{abstract}
In a recent paper, B\'{o}na and Smith define the notion of \textit{strong avoidance}, in which a permutation and its square both avoid a given pattern. In this paper, we generalize this idea to what we call \textit{chain avoidance}. We say that a permutation avoids a chain of patterns $(\tau_1 : \tau_2: \cdots : \tau_k)$ if the $i$-th power of the permutation avoids the pattern $\tau_i$. 
We enumerate the set of permutations $\pi$ which avoid the chain $(213, 312 : \tau)$, i.e.,~unimodal permutations whose square avoids $\tau$, for $\tau \in \S_3$ and use this to find a lower bound on the number of permutations that avoid the chain $(312: \tau)$ for $\tau \in \S_3$. We finish the paper by discussing permutations that avoid longer chains.

\end{abstract}
{\bf Keywords:} 
Pattern avoidance, strong pattern avoidance, unimodal permutations

\section{Introduction and Background} 

Let $S_n$ denote the symmetric group on $[n]=\{1,\ldots,n\}$. Each element of this group is a bijection that can be written in its one-line notation as $\pi = \pi_1\pi_2\ldots\pi_n$ where $\pi_i=\pi(i)$ for all $i\in[n]$. From this perspective we can consider \emph{patterns} in the permutation. If $\pi =\pi_1\cdots\pi_n\in\S_n$ and $\tau=\tau_1\cdots\tau_m\in\S_m$, then we say $\pi$ contains the pattern $\tau$ if there exist entries $\pi_{i_1},\ldots,\pi_{i_m}$ such that $i_1<\cdots< i_m$ and  $\pi_{i_j}<\pi_{i_k}$ if and only if $\tau_j<\tau_k$ for all $j,k\in [m]$. If $\pi$ does not contain $\tau$ then we say $\pi$ \emph{avoids} $\tau$.

The notion of \emph{strong pattern avoidance} was recently defined in \cite{BS19}. A permutation is said to strongly avoid a given pattern $\tau$ if both $\pi$ and $\pi^2$ avoid $\tau$. In \cite{BS19}, the authors find that the enumeration of the set of permutations that strongly avoid 312 is given by the generating function: 
\[
S(x) = \dfrac{1-x-x^2+x^3}{1-2x-x^2+2x^3-x^4}
\]
and show that the number that strongly avoid $12\ldots k$ is eventually zero. They also find a lower bound for the number of permutations that strongly avoid 321, showing it has a growth rate of at least 2.3247. 

In \cite{BD20} the authors continue the study of strong avoidance. In particular, they enumerate the set of permutations that strongly avoid the sets $\{132, 3421\}$, $\{213, 4312\}$, and $\{321, 3412\}$. They also extend the definition of strong avoidance to what they call \textit{powerful avoidance} of patterns, in which a permutation and all its powers avoid a given pattern.
In \cite{P23}, the authors prove a conjecture from \cite{BD20}, namely that the number of permutations that strongly avoid the set $\{321, 1342\}$ is given by $2F_{n+1}-n-2$, where $F_{n+1}$ is the $(n+1)$-st Fibonacci number.

In this paper, we consider a generalization of this property, which we call \emph{chain pattern avoidance}, in which we say that a permutation $\pi$ avoids the chain $(\tau_1 : \tau_2: \cdots : \tau_k)$ if the $i$-th power of $\pi$ (i.e.,~$\pi^i$) avoids $\tau_i$.  We denote the set of permutations in $\S_n$ that avoid a given chain $(\tau_1 : \tau_2: \cdots : \tau_k)$ by $\C_n(\tau_1 : \tau_2: \cdots : \tau_k)$ and we denote the number of such permutations by $\c_n(\tau_1 : \tau_2: \cdots : \tau_k)$. 

Note that this definition can be extended to include sets of patterns as well, which we will separate with a comma. For example, if $\pi$ avoids the chain $(\sigma_1, \sigma_2 : \tau : \rho_1, \rho_2)$, then $\pi$ avoids both $\sigma_1$ and $\sigma_2$, $\pi^2$ avoids $\tau$, and $\pi^3$ avoids both $\rho_1$ and $\rho_2$. As another example, $\pi$ avoids the chain $(\sigma : \varnothing : \varnothing : \tau)$ if $\pi$ avoids $\sigma$ and $\pi^4$ avoids $\tau$. In this case, we place no requirement on pattern avoidance of $\pi^2$ or $\pi^3$ as indicated by using the empty set in the associated positions in the chain.

We can also extend this notion to chains of infinite length. For instance, we would say that a permutation $\pi$ avoids the chain $(\tau^{\infty})$ if $\pi^i$ avoids $\tau$ for all $i\geq 1$. This is equivalent to the recently introduced notion of \emph{powerful pattern avoidance} found in \cite{BD20}. If $\pi$ avoids $\tau$ and all higher powers $\pi^i$ avoid $\eta$ for $i\geq 2$, we would say $\pi$ avoids the chain $(\tau : \eta^{\infty})$. 

Our primary results in this paper include the enumeration of permutations on $n$ elements that avoid certain chains. We begin by investigating avoidance chains that start with $\pi$ avoiding the pair 231 and 321 and with $\pi^2$ avoiding a single permutation $\tau\in\S_3.$  A summary of these results can be found in Table~\ref{tab:unimodal}. We then use these results to find lower bounds on permutations that avoid 312 and whose squares avoids $\tau\in\S_3$. We find that for all such $\tau$ except 123, the number of permutations avoiding the chain $(312: \tau)$ is bounded below by $2^{n-1}$, and in some cases provide a better bound (see Table~\ref{table:312}). We then turn our attention to studying permutations that avoid longer chains. In particular, we enumerate permutations that avoid 231 and 321 and whose $k$-th power avoids a single pattern of length 3. Examples of these results appear in Table~\ref{table:231321}.

\section{Preliminaries}

The \emph{direct sum} of two permutations $\sigma\in\S_k$ and $\tau\in\S_\ell$ is the permutation $\pi= \sigma\oplus \tau\in\S_{k+\ell}$ defined by  \[\pi_i = \begin{cases}
    \sigma_i & \text{for } 1\leq i\leq k\\
    \tau_{i-k}+k & \text{for } k+1\leq i\leq k+\ell.
\end{cases}\]
For example, $52314\oplus 2341 = 523147896$. As another example, consider the composition $d= (2,1,5,3)$ of 11. Then, if $\delta_m$ denotes the decreasing permutation of length $m$, we have \[\bigoplus_{i=1}^4\delta_{d_i} = 21\oplus 1\oplus 54321\oplus 321 = 2\ 1\ 3\ 8\ 7\ 6\ 5\ 4\ 11\ 10\ 9.\]

The following lemma states that direct sums behave well under taking powers of permutations. 

\begin{lemma}\label{lem:directsumsquare}
Suppose $\sigma_i\in\S_{m_i}$ with $m_i\geq 1$ for each $1\leq i\leq k$ and let $m$ be any positive integer. If $\pi = \bigoplus_{i=1}^k \sigma_i$, then $\pi^m= \bigoplus_{i=1}^k \sigma_i^m$. 
\end{lemma}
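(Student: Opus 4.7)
The plan is to prove this by induction on $m$, using the key structural fact that a direct sum of permutations decomposes $[m_1+\cdots+m_k]$ into invariant blocks. Specifically, setting $M_0 = 0$ and $M_i = m_1 + m_2 + \cdots + m_i$, the interval $B_i = \{M_{i-1}+1, \ldots, M_i\}$ is mapped bijectively onto itself by $\pi$. Indeed, directly from the definition of direct sum, for any $j \in B_i$ we have $\pi(j) = \sigma_i(j - M_{i-1}) + M_{i-1}$, which again lies in $B_i$.

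For the inductive step, assume that $\pi^{m-1} = \bigoplus_{i=1}^k \sigma_i^{m-1}$, so that for $j \in B_i$ we have $\pi^{m-1}(j) = \sigma_i^{m-1}(j - M_{i-1}) + M_{i-1} \in B_i$. Applying $\pi$ once more and using the block description from the previous paragraph with $j' = \pi^{m-1}(j) \in B_i$, I would compute
\[
\pi^m(j) = \pi(j') = \sigma_i\bigl(j' - M_{i-1}\bigr) + M_{i-1} = \sigma_i\bigl(\sigma_i^{m-1}(j - M_{i-1})\bigr) + M_{i-1} = \sigma_i^m(j - M_{i-1}) + M_{i-1}.
\]
This is exactly the description of the $j$-th entry of $\bigoplus_{i=1}^k \sigma_i^m$, completing the induction.

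The base case $m=1$ is just the definition of $\pi$. There is really no hard step here — the only place to be careful is the index shift by $M_{i-1}$ and verifying that the image of a block under $\pi^{m-1}$ stays inside that same block, so that the next application of $\pi$ uses the correct component $\sigma_i$. Since this is purely bookkeeping, I would write the proof compactly as a two- or three-line induction rather than setting up elaborate notation.
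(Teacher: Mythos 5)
Your proof is correct and uses the same key idea as the paper's, namely that each block $B_i$ is invariant under $\pi$ and hence under all its powers; the paper states this invariance and concludes immediately, while you carry out the explicit induction on $m$. No substantive difference in approach.
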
 

\begin{proof}
    If $k=1$ then $\pi=\sigma_1$, so assume $k>1$. Notice that for each $1\leq i \leq k$, if $\sum_{\ell=1}^{i-1} m_\ell< j \leq\sum_{\ell=1}^{i} m_\ell$,
    then $\sum_{\ell=1}^{i-1} m_\ell<\pi_j \leq\sum_{\ell=1}^{i} m_\ell$ also. In other words, all elements associated to $\sigma_i$ are mapped to each other under $\pi$, and thus under any powers of $\pi$. Therefore, the result must follow. 
\end{proof}

Similarly, the \emph{skew sum} of two permutations $\sigma\in\S_k$ and $\tau\in\S_\ell$ as the permutation $\pi= \sigma\ominus \tau\in\S_{k+\ell}$ defined by  \[\pi_i = \begin{cases}
    \sigma_i+\ell & \text{for } 1\leq i\leq k\\
    \tau_{i-k} & \text{for } k+1\leq i\leq k+\ell.
\end{cases}\]
For example, $52314\ominus 2341 = 967582341$.

\section{Avoiding the chain \texorpdfstring{$(312,213:\tau)$}{(312,213:tau)}}\label{312,213}

We start with the study of permutations in $\S_n$  that avoid both 213 and 312. These are exactly the set of unimodal permutations, i.e.,~those permutations $\pi = \pi_1\pi_2\ldots\pi_n$ so that there is some $k$ with $\pi_1<\pi_2<\cdots<\pi_k$ and $\pi_k>\pi_{k+1}>\cdots>\pi_n$. In this section, we enumerate permutations that avoid the chain $(213, 312 : \tau)$ for a given pattern $\tau\in \S_3$. A summary of these results can be found in Table~\ref{tab:unimodal}.

\renewcommand{\arraystretch}{2}
 \begin{table}[ht]
            \begin{center}
\begin{tabular}{| >{\centering\arraybackslash} m{1cm} | >{\centering\arraybackslash} m{3cm} | >{\centering\arraybackslash} m{3cm} | >{\centering\arraybackslash} m{3cm} |}
    \hline
    $\tau$ & $\c_n(213, 312 : \tau)$ & Theorem & OEIS \\
    \hline
    \hline
   123 &
   $\left\lfloor \dfrac{n}{2} \right\rfloor$ &
    Theorem~\ref{thm:uni-123}  &A004526 \\[4pt]
    \hline
    132 & $n+1$ & Theorem~\ref{thm:uni-132} &A000027 \\[4pt]
    \hline
    213 & $\left\lceil \dfrac{n(n+1)}{3} \right\rceil$ & Theorem~\ref{thm:uni-213}  & A007980\\[4pt]
    \hline
    231 & $\left\lceil \dfrac{n(n+1)}{3} \right\rceil$ & Theorem~\ref{thm:uni-231}  & A007980\\[4pt]
    \hline
    312 & $\left\lfloor \dfrac{n^2}{4} \right\rfloor + 1$ & Theorem~\ref{thm:uni-312} & A033638\\[4pt]
    \hline
    321 & $\left\lfloor \dfrac{(n+3)^2}{4} \right\rfloor - 5$ & Theorem~\ref{thm:uni-321}  & $\text{A002620}-5$ \\[4pt]
    \hline
\end{tabular}
            \end{center}
            \caption{Number of unimodal permutations whose square avoids a pattern of length 3 and the associated OEIS \cite{OEIS} number for each. }
            \label{tab:unimodal} 
      \end{table}

We begin with the case where $\tau=123$, i.e., those permutations that avoid the patterns 213 and 312 and whose square avoids the pattern 123. This is the most complicated proof in this section.
\begin{theorem}\label{thm:uni-123}
For  $n\geq 8$,  the number of permutations in $\S_n$ that avoid the chain $(213, 312:  123)$ is \[\c_n(213,312: 123) = \Big\lfloor \dfrac{n}{2}\Big\rfloor.\]
\end{theorem}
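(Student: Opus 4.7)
The plan is to prove the theorem in two parts. For the sufficiency direction, I would identify $\lfloor n/2 \rfloor$ explicit unimodal permutations whose squares avoid $123$. Guided by small cases: when $n=2m$, take the permutation $\tau_0=(m+1)(m+2)\cdots(2m)\,m\,(m-1)\cdots 21$ together with, for each $j\in\{2,\ldots,m\}$, the permutation $\tau_j$ obtained from $\tau_0$ by pulling $j$ out of the descending block and prepending it to position $1$; when $n=2m+1$, take the $m$ analogous permutations obtained from $(m+2)(m+3)\cdots(2m+1)(m+1)\,m\,(m-1)\cdots 21$ by pulling $j\in\{2,\ldots,m+1\}$ to the front. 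This gives exactly $\lfloor n/2\rfloor$ candidates. For each, I compute $\tau^2$ directly and verify it decomposes as a shuffle of two decreasing subsequences, which is equivalent to $\tau^2$ avoiding $123$.

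For the necessity direction, suppose $\pi$ is unimodal with $\pi_k = n$ and $\pi^2$ avoiding $123$. A counting argument first restricts the peak position: if $k \geq \lfloor n/2 \rfloor + 2$, then at least $2k - n \geq 3$ of the positions $i \in [1, k]$ satisfy $\pi_i \leq k$, and since $\pi$ is increasing on $[1, k]$, the images $\pi^2(i) = \pi_{\pi_i}$ at the first three such positions are strictly increasing, producing a $123$ in $\pi^2$. A dual counting argument applied to the descending side handles $k \leq \lfloor (n-3)/2 \rfloor$, and the borderline peaks are ruled out by a short direct check. Hence $k \in \{\lceil n/2\rceil, \lfloor n/2\rfloor + 1\}$. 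With $k$ pinned down, I show every value in $\{\lceil n/2\rceil + 1, \ldots, n - 1\}$ must belong to the left set $A=\{\pi_1,\ldots,\pi_{k-1}\}$: otherwise such a value sits early in the descending block and, together with $\pi(n) = \min B$ and another small value, forms a $123$ in $\pi^2$. The size of $A$ then forces the remaining elements of $A$ in $\{1, \ldots, \lceil n/2 \rceil\}$ to number $0$ or $1$, and when the count is $1$ the value $1$ is forbidden in that role because $\pi^2(1) = 1$ combines with any $12$-pattern among $\pi^2(2), \ldots, \pi^2(n)$ to produce a $123$. The remaining configurations are exactly the $\tau_j$ from the first part.

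The main obstacle is the inner case analysis in the necessity direction: for every forbidden configuration of the left set $A$ one must exhibit an explicit triple of positions in $\pi^2$ realizing a $123$, and this splits according to which small element $A$ contains, the precise peak position, and the parity of $n$. The bound $n \geq 8$ is exactly what guarantees enough room between the top and bottom halves of $[n]$ for these forbidden triples to be constructed uniformly; for $n \leq 7$ the halves collapse enough that sporadic extra valid permutations exist, as evidenced by the small cases.
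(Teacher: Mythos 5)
Your proposal is correct and follows essentially the same route as the paper: both arguments pin the position of $n$ to the middle, show that all of the large values $\lceil n/2\rceil+1,\dots,n-1$ must lie in the increasing prefix so that the permutation is determined by the single small value $\pi_1\in\{2,\dots,\lceil n/2\rceil\}$ (with $\pi_1=1$ excluded because $\pi^2$ would have to be $1\,n\,(n-1)\cdots 2$), and then verify the resulting $\lfloor n/2\rfloor$ candidates by computing their squares. Your pigeonhole argument for locating the peak and the ``union of two decreasing subsequences'' criterion for $123$-avoidance are slightly cleaner packagings of the paper's explicit-triple computations, but they are not a different method.
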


\begin{proof}
  Let $n\geq 8$.  
  Let us first show that $2\leq \pi_1\leq \lceil\frac{n+1}{2}\rceil$. First, we note that $\pi_1\neq 1$ since otherwise in order for $\pi^2$ to avoid 123, we would need $\pi^2 = 1n(n-1)\ldots 32$. This would imply that $\pi_2\neq 2$ but since $\pi$ is unimodal, would would have to have $\pi_n=2$. But this would imply that $2 = \pi^2_n = \pi_{\pi_n}=\pi_{2}$, which is a contradiction.
  We also cannot have $\pi_1> \lceil\frac{n+1}{2}\rceil$ since this would imply that $\pi_i=n$ for some $i<n+1-\pi_1$ since $\pi$ must end in $(\pi_1-1)(\pi_1-2)\ldots 21$. Therefore, in $\pi^2$, we would have $\pi^2_i=1$ and $\pi^2_n = \pi_1$. Since $\pi_1> \lceil\frac{n+1}{2}\rceil$ and $i< n+1- \lceil\frac{n+1}{2}\rceil$, there must be some element $\pi^2_j$ with $i<j<n$ so that $\pi^2_j<\pi_1$, implying that $1\pi^2_j\pi_1$ is a 123 pattern in $\pi^2$. 
   Thus $\pi_1 \in \{2,3,\ldots , \lceil\frac{n+1}{2}\rceil\}$. 
   

  Now, let us show that if $\pi_i=n$, then either either $i=\frac{n+1}{2}$ if $n$ is odd, or $i\in\{\frac{n}{2},\frac{n}{2}+1\}$ if $n$ is even. For sake of contradiction, suppose $n$ occurs before the $\lceil\frac{n}{2}\rceil$ position, say position $j$. Then $\pi^2$ has a 1 in $j$-th position. Since $n$ occurred before the $\lceil\frac{n}{2}\rceil$ position, then there is necessarily a decreasing sequence at least as long as $\lceil\frac{n}{2}\rceil$ at the end of $\pi$, and thus $\pi_{j+1}\geq \lfloor \frac{n}{2}\rfloor+1$ and $\pi_{j+2}\geq \lfloor \frac{n}{2}\rfloor$. But then $\pi^2_{j+1}<\pi^2_{j+2}$ and so $1\pi^2_{j+1}\pi_{j+2}^2$ is a 123 pattern. Thus $n$ cannot occur before the $\lceil\frac{n}{2}\rceil$-th position. 
  Now suppose $n$ occurs after the $\lceil\frac{n+1}{2}\rceil$-th position. Then we must have $\pi_1\leq \lfloor \frac{n-1}{2}\rfloor$ and $\pi_2 \leq \lfloor\frac{n+1}{2}\rfloor$. However, this implies that $\pi^2_1<\pi^2_2<n$, and so $\pi^2_1\pi_2^2n$ is a 123 pattern in $\pi^2.$ 
  
  We claim that if $n$ is odd, then $\pi$ must be of the form \[\pi = k\left(\frac{n+1}{2}+1\right)\left(\frac{n+1}{2}+2\right)\ldots n  \left(\frac{n+1}{2}\right)\left(\frac{n+1}{2}-1\right)\ldots (k+1)(k-1)\ldots  21\] with $k \in \{2,3,\ldots , \frac{n+1}{2}\}$. 
  Notice this is a unimodal permutation and in this form $n$ always occurs in the $(\frac{n+1}{2})$-th position. Then the square of $\pi$ would be \[\pi^2=\pi_k\left(\frac{n+1}{2}\right)\left(\frac{n+1}{2}-1\right)\left(\frac{n+1}{2}-2\right)\ldots 1n(n-1)\ldots\pi_{k+1}\pi_{k-1}\ldots\left(\frac{n+1}{2}+1\right)k   \] with $\pi_k \in \{\frac{n+1}{2}+1, \frac{n+1}{2}+2,\ldots,n \} $ and thus $\pi^2$ avoids 123. Note that there are exactly $\frac{n-1}{2}$ of these permutations.

  To prove this claim, we need only show that if $1<i<\frac{n+1}{2}$ and $j>\frac{n+1}{2}$, then $\pi_i>\pi_j.$ That is, every element before $n$ (except $\pi_1$) must be greater than every element after $n$. For the sake of contradiction, suppose this is not the case. Then, $j=\pi_2$ would be smaller than $m=\pi_{i+1}$, and so if $\pi_1=k$, we have $k<j<m$. Then, $\pi_k\pi_j\pi_m$ is a sequence in $\pi^2$. If this is not a 123 pattern, we need $j\geq \frac{n+1}{2}$. If $j>\frac{n+1}{2}$, $\pi$ must be of the form described above since $\pi_{\frac{n+1}{2}}=n,$ so $j=\frac{n+1}{2}$. In this case, either $\pi_m=m$ if $m=\frac{n+1}{2}+1$, or $\pi_m<\frac{n+1}{2}=j.$ But in both cases, $\pi^2_{\frac{n+1}{2}+2} = \pi_{\frac{n+1}{2}-1}>\pi_m$, and so $1\pi_m\pi_{\frac{n+1}{2}-1}$ is a 123 pattern.

   Similarly, we claim that if $n$ is even, then either \[\pi = k\left(\frac{n}{2}+1\right)\left(\frac{n}{2}+2\right)\ldots  n \left(\frac{n}{2}\right)\left(\frac{n}{2}-1\right)\ldots (k+1)(k-1)\ldots 1\] with $k \in \{2,3,\ldots , \frac{n}{2} \} $ and with \[\pi^2 = \pi_k  n\left( \frac{n}{2}\right)\left(\frac{n}{2}-1\right)\ldots  1 (n-1)(n-2)\ldots  \pi_{k+1}\pi_k\ldots\left(\frac{n}{2}+1\right) k\] which clearly avoids 123 since $\pi_k>\frac{n}{2}$ and $k\leq\frac{n}{2}$, or we have \[\pi = \left(\frac{n}{2}+1\right)\left(\frac{n}{2}+2\right) \ldots  n \left(\frac{n}{2}\right)\left(\frac{n}{2}-2\right)\ldots 321\] with \[\pi^2 = 
   \left(\frac{n}{2}-1\right)\left(\frac{n}{2}-2\right)\ldots 321n(n-1)\ldots\left(\frac{n}{2}+1\right)\] which also clearly avoids 123. Note that there are exactly $\frac{n}{2}$ of these permutations. 

A very similar argument proves that every element to the left of $n$, except possibly $\pi_1=k$, must be greater than every element to the right of $n.$ We need only show that if $\pi_1\in\{2,3,\ldots,\frac{n}{2}\}$, we must not have \[\pi = k\left(\frac{n}{2}+2\right)\left(\frac{n}{2}+3\right)\ldots  n \left(\frac{n}{2}+1\right)\left(\frac{n}{2}\right)\ldots (k+1)(k-1)\ldots 1,\] which is the only other possibility given the restrictions of where $n$ can occur in $\pi$. However, in this case, we have 
\[\pi^2 = \pi_k\left(\frac{n}{2}\right)\ldots (k+1)(k-1)\ldots 21 \left(\frac{n}{2}+1\right)n\ldots \pi_{k+1}\pi_k \ldots \left(\frac{n}{2}+2\right) k,\]
which has the 123 pattern $1(\frac{n}{2}+1)n$. Therefore, the proof is complete.
\end{proof}

Next, we consider unimodal permutations whose square avoids 132.

\begin{theorem}\label{thm:uni-132}
    For  $n\geq 3$,  the number of permutations in $\S_n$ that avoid the chain $(213, 312:  132)$ is $\c_n(213,312: 132) = n+1.$
\end{theorem}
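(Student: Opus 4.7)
The plan is to exhibit exactly $n+1$ unimodal permutations whose squares avoid $132$, then show no others exist.

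The candidates are: (a) for each peak position $k \in \{1, \ldots, n\}$, the permutation $\pi^{(k)} = 12\cdots(k-1)\,n\,(n-1)\cdots k$, which equals the direct sum of the identity on $[k-1]$ with $\delta_{n-k+1}$ and is a unimodal involution; and (b) the $n$-cycle $c_n = 23\cdots n\,1$, which is unimodal with peak at position $n-1$. These $n+1$ permutations are pairwise distinct for $n \geq 3$. By Lemma~\ref{lem:directsumsquare}, each $\pi^{(k)}$ squares to the identity permutation, which trivially avoids $132$. For $c_n$, a direct computation gives $c_n^2 = 34\cdots n\,1\,2$; since the only values less than $3$ occupy the final two positions, no triple of positions $i<j<k'$ can yield $\pi^2_i < \pi^2_{k'} < \pi^2_j$, so $c_n^2$ avoids $132$ as well.

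For uniqueness, suppose $\pi$ is unimodal with peak at position $k$ and set $b := \pi_n$. Let $A$ and $B$ denote the sets of values in the ascending and descending parts of $\pi$, respectively. Pigeonhole forces $b \leq k$ (the descending segment has $n-k$ distinct values from $[n-1]$, but only $n-k-1$ values in $[n-1]$ exceed $k$). If $b = k$, the descending part must be $(n-1)(n-2)\cdots k$ and the ascending part $12\cdots(k-1)$, yielding $\pi = \pi^{(k)}$. When $b < k$, the key identity $\pi^2_k = \pi(n) = b$ plants a small value at position $k$ of $\pi^2$, and I claim $\pi^2$ then contains a $132$ pattern unless $(k, b) = (n-1, 1)$, i.e., unless $\pi = c_n$.

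The main obstacle is this final claim, which I would handle by case analysis on $k$. When $k = n-1$, the permutation has the explicit form $\pi = 12\cdots(b-1)(b+1)\cdots(n-1)\,n\,b$, yielding $\pi^2 = 12\cdots(b-1)(b+2)\cdots n\,b\,(b+1)$; for $b \geq 2$, positions $(b-1, b, n)$ of $\pi^2$ hold the values $(b-1, b+2, b+1)$ and exhibit the $132$ pattern, while $b = 1$ recovers precisely $c_n$. When $k < n-1$, the descending part has at least two elements, and the conditions $\min B = b < k$ together with $\{1, \ldots, b-1\} \subseteq A$ (forced since $b = \min B$) constrain $\pi$ enough that an examination of where the value $k$ lies in $\pi$ (either in $A$ or in $B$), combined with the positions of small values in $\pi^2$, produces an explicit $132$ pattern in every configuration. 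This uniform case analysis, isolating $c_n$ as the unique non-involution exception, is the delicate part of the argument.
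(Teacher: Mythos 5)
Your list of candidates is correct and matches the paper's (your $\pi^{(k)}$ are the unimodal involutions $\iota_{k-1}\oplus\delta_{n-k+1}$, plus the cycle $23\cdots n1$), your verification that they work is fine, and your reductions ($b\leq k$ by pigeonhole; $b=k$ forces $\pi=\pi^{(k)}$; the explicit $132$ pattern when $k=n-1$ and $b\geq 2$) are all sound. But there is a genuine gap: the case $b<k$ with $k\leq n-2$, which you yourself flag as ``the delicate part,'' is only asserted, not proved. You say that examining where the value $k$ sits, ``combined with the positions of small values in $\pi^2$, produces an explicit $132$ pattern in every configuration,'' but you never exhibit such a pattern, and this case covers a large family of unimodal permutations (e.g.\ $34521$, $23541$, $14532$ for $n=5$), so it is the heart of the uniqueness argument rather than a routine check. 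As written, the proof is incomplete.

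The gap is closable, and the cleanest way to close it is essentially the paper's route, which conditions on $\pi_1$ instead of on the peak position. If $b=\pi_n\geq 2$, then $1$ must sit in the ascending part, so $\pi_1=1$, hence $\pi^2_1=1$; avoiding $132$ with a leading $1$ forces $\pi^2$ to be increasing, so $\pi$ is an involution --- contradicting your own identity $\pi^2_k=b\neq k$. If $b=1$ and $k\leq n-2$, then $\pi^2$ has $1$ at position $k$ and $\pi_1$ at position $n$; when $\pi_1>2$ one gets the $132$ pattern $1,\pi_2,\pi_1$ (using $\pi_{n-1}=2$, so $\pi^2_{n-1}=\pi_2>\pi_1$), and when $\pi_1=2$ the entry $\pi^2_{n-1}=\pi(\pi_{n-1})$ is at least $3$ because $\pi_{n-1}\neq n$, giving the $132$ pattern $1,\pi^2_{n-1},2$. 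If you splice an argument of this shape into your $b<k$, $k\leq n-2$ case, your proof becomes complete; note that the $b\geq 2$ half of it (forcing $\pi$ to be an involution) would also let you absorb your separate $b=k$ analysis.
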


\begin{proof}
    Suppose $\pi\in\C_n(213,312:132)$ has $\pi_1=1$. Then we must also have $\pi^2_1=1$, and thus $\pi^2$ must be the identity permutation. This means that $\pi$ must be an involution. However, the only involutions that are unimodal and start with $\pi_1=1$ are of the form: $\pi=12\ldots kn(n-1)\ldots (k+1)$ for some $k\geq 1$. Together with the involution $\pi=n(n-1)\ldots 21$, this gives us $n$ possible permutations in $\C_n(213,312:132)$. We claim that the only other permutation in $\C_n(213,312:132)$ is the permutation $\pi = 23\ldots n1$ whose square is $\pi^2=34\ldots n12$, which avoids 132.

     For the sake of contradiction, suppose $\pi$ satisfies the properties that $\pi_1\not\in\{1,n\}$, and thus $\pi_2>\pi_1$ and $\pi_n=1.$ Furthermore, we can assume $\pi_{n-1}\neq n$ since the only such permutation would be $\pi = 23\ldots n1$, as considered in the previous paragraph. If $\pi_1>2$, then $\pi_{n-1}\pi_n=21$, and so $1\pi_2\pi_1$ is a 132 pattern in $\pi^2.$ If $\pi_1=2,$ then $\pi^2_n=2$, and so the only way for $\pi^2$ to avoid a 132 pattern is if $\pi^{2}_{n-1}=1,$ but this would imply $\pi_{n-1}=n$ and so $\pi=23\ldots n1$, as already considered above.
\end{proof}

In the next theorem, we consider those unimodal permutations whose square avoids 213. Notice that these are exactly the permutations that avoid 312 and strongly avoid 213.

\begin{theorem}\label{thm:uni-213}
    For $n\geq 1$, the number of permutations in $\S_n$ that avoid the chain $(213, 312 : 213)$ is \[\c_n(213,312: 213) =\bigg\lceil \dfrac{n(n+1)}{3}\bigg\rceil.\]
\end{theorem}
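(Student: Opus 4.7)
My approach is to split $\C_n(213,312:213)$ based on the position of the value $1$ in $\pi$. Because $\pi$ is unimodal, $\pi_1$ is the smallest element of the increasing portion and $\pi_n$ is the smallest of the decreasing portion, so necessarily either $\pi_1=1$ or $\pi_n=1$. Let $A_n$ count the valid $\pi$ with $\pi_1=1$ and $B_n$ count those with $\pi_n=1$, so that $\c_n(213,312:213)=A_n+B_n$. I will handle $A_n$ by reduction to a smaller instance of the problem and handle $B_n$ by a direct structural characterization of the admissible permutations.

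For the case $\pi_1=1$: here $\pi^2_1=\pi(1)=1$, which is the minimum value of $\pi^2$. Since the minimum value can never play the middle (``$2$'') role in a $213$ pattern, $\pi^2$ avoids $213$ if and only if its restriction $\pi^2_2\pi^2_3\cdots\pi^2_n$ does. Since $\pi$ restricted to $\{2,\ldots,n\}$ is itself a unimodal permutation, standard relabeling yields a bijection between such $\pi$ and unimodal permutations of length $n-1$ whose square avoids $213$. This gives $A_n=\c_{n-1}(213,312:213)$, and induction on $n$ reduces the theorem to verifying the single-step difference $B_n=\lceil n(n+1)/3\rceil-\lceil(n-1)n/3\rceil$.

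For the case $\pi_n=1$: write $\pi=L\cdot n\cdot R$, where $L\subseteq\{2,\ldots,n-1\}$ appears in increasing order before $n$ and $R=[n-1]\setminus L$ appears in decreasing order after $n$ (with $1\in R$). I would analyze $\pi^2$ block by block by tracking, for each position $i$, whether $\pi_i$ lies in $L$, equals $n$, or lies in $R$; this controls which ``slab'' of $\pi^2$ an entry belongs to. Two useful anchors are $\pi^2_n=\pi_1$ and $\pi^2_1=\pi(\pi_1)$. Small-case computation shows that the admissible $L$'s always include $L=\emptyset$, $L=\{2\}$, and $L=\{2,3,\ldots,n-1\}$, together with a small number of additional ``contiguous middle'' intervals $L=\{j,j+1,\ldots,k\}$ whose endpoints depend in an arithmetic way on $n$ modulo $3$. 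The plan is to prove this classification by explicitly exhibiting a $213$ pattern in $\pi^2$ for every disqualified $L$, and to verify $213$-avoidance for the surviving families by inspecting the block structure of $\pi^2$.

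The main obstacle will be the classification step in the $\pi_n=1$ case, because admissibility of a given $L$ depends on a delicate interplay between $|L|$, the position of $n$, and the entries of $R$: for instance, $L=\{3,4,5\}$ is admissible when $n=7$ but fails when $n=8$ due to a fixed point in $\pi^2$ creating a $213$. Once the classification is complete, one checks that the number of surviving $L$'s equals $\lfloor(2n+1)/3\rfloor$ for $n\geq 2$, and the telescoping sum $\sum_{k\leq n}(A_k+B_k-A_{k-1})$ confirms $A_n+B_n=\lceil n(n+1)/3\rceil$, completing the induction.
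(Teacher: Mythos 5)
Your overall strategy coincides with the paper's: split on whether $\pi_1=1$ or $\pi_n=1$, reduce the first case to length $n-1$ via the decomposition $\pi=1\oplus\pi'$, and count the second case directly to obtain the increment $\lfloor(2n+1)/3\rfloor$. The reduction $A_n=c_{n-1}(213,312:213)$ and the final telescoping arithmetic are both correct and match the paper.

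The gap is in the $\pi_n=1$ case, which is the entire substance of the theorem and which you explicitly leave as a plan. Two things are missing. First, you never prove that an admissible $L$ must be a contiguous interval of values; you only observe this in small cases. The paper establishes it as a structural lemma: if $\pi$ is not of the form $(\iota_a\oplus\delta_b)\ominus\delta_c$, there exist positions $1<j<i<\ell$ with $\pi_j=k+1$, $\pi_i=n$, $\pi_\ell=k$, and examining the subsequence $\pi_{\pi_1}\,\pi_{k+1}\,1\,\pi_k$ of $\pi^2$ shows that avoiding $213$ there forces a $213$ or $312$ back in $\pi$ itself. Second, even granting contiguity, you do not determine which intervals $L=\{c+1,\dots,c+a\}$ survive; you assert ``a small number of contiguous middle intervals whose endpoints depend arithmetically on $n\bmod 3$'' from data. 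The paper pins this down: besides the case $a=c=0$, avoidance forces the values $\{\pi_{c+1},\dots,\pi_{c+a}\}$ to be exactly $\{n-a+1,\dots,n\}$, which happens precisely when either $c=a$ with $b\ge c$, or $c<a$ with $b=c$, yielding $\lfloor n/3\rfloor+\lfloor(n-1)/3\rfloor+1=\lfloor(2n+1)/3\rfloor$ permutations. Without these two arguments your count is an empirical observation rather than a proof; the delicate dependence on $n\bmod 3$ that you yourself flag (e.g.\ $L=\{3,4,5\}$ admissible for $n=7$ but not $n=8$) is exactly what the conditions $b\ge c$ versus $b=c$ encode, and it must be derived, not conjectured.
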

\begin{proof}
    We will show that the number of permutations avoiding $(213, 312 : 213)$ satisfies the recurrence $c_n(213,312:213)=c_{n-1}(213,312:213)+\big\lfloor\frac{2n+1}{3}\big\rfloor$. First note that there are clearly $c_{n-1}(213,312:213)$ such permutations with $\pi_1=1$ since any permutation with this property must be of the form $1\oplus \pi'$ for any $\pi'\in\C_{n-1}(213,312:213).$ It remains to show that there are exactly $\big\lfloor\frac{2n+1}{3}\big\rfloor$ permutations in $\C_n(213,312:213)$ with $\pi_1\neq 1$ and thus $\pi_n=1$. 

    We first claim that any permutation avoiding the chain $(213,312 : 213)$ must be of the form $(\iota_a\oplus \delta_b)\ominus \delta_c$ for some $a+b+c=n$ where $\iota_a$ is the increasing permutation of length $a$ and $\delta_b$ and $\delta_c$ are the decreasing permutations of length $b$ and $c$, respectively. If $\pi$ were not of this form, there would be some  $k$ so that $\pi_j=k+1$, $\pi_i=n$ and $\pi_\ell=k$ with $1<j<i<\ell$. 
    However, we would then have the subsequence $\pi^2_1\pi^2_j\pi^2_i\pi^2_\ell = \pi_{\pi_1}\pi_{k+1}1\pi_k$ in $\pi^2$. In order to avoid 213, we would need both $\pi_{\pi_1}$ and $\pi_{k+1}$ to be greater than $\pi_k$. But since $\pi_1<k+1$, and thus $\pi_1<k$, this would imply that $\pi_{\pi_{1}}\pi_k\pi_{k+1}$ is either a 213 or 312 pattern in the original permutation $\pi,$ which is a contradiction.

    We next note that the unimodal permutation with $\pi_1=n$ (i.e. with $a=c=0$), is a permutation avoiding the chain $(213,312: 213)$ since $\pi=n(n-1)\ldots 21$ and $\pi^2=12\ldots (n-1)n.$

    Now suppose instead that $a,b,c>0$. Notice that since $\pi$ has the form $(\iota_a\oplus \delta_b)\ominus \delta_c$, it is enough to know $a$ and $c$ since $b=n-a-c$. In this case, $\pi_{a+1} = n$ and $\pi_1=c+1$.  Let us now write \[\pi = (c+1)(c+2)\ldots(c+a)n(n-1)\ldots (n-a-c+1)c(c-1)\ldots 21.\]
    Now, since $\pi^2$ must avoid 213 and $\pi_n=1$, we must have that the elements in $\{\pi_{c+1}, \ldots, \pi_{c+a}
    \}$ are the numbers $\{n-a+1, \ldots, n\}$. In particular, we must have that $\pi_{c+1}>\pi_c$, and so $c+1\leq a+1$ since $a+1$ is the position of $n$ in $\pi$. 

    Let us first consider the case where $c=a$. Then 
    $\pi_{c+1}\ldots\pi_{c+a}=n(n-1)\ldots (n-a-1)$ exactly when $b\geq c.$ Equivalently, this works when $1\leq a\leq \frac{n}{3}$, so there are $\lfloor\frac{n}{3}\rfloor$ permutations of this form. 

    Finally, consider the case where $c<a$. Then $\pi_{c+1}<n$. Since \[\pi = (c+1)(c+2)\ldots(c+a)n(n-1)\ldots (n-a-c+1)c(c-1)\ldots 21\] and  the elements in $\{\pi_{c+1}, \ldots, \pi_{c+a}
    \}$ must be the numbers in the interval $\{n-a+1, \ldots, n\}$, we must have that $n-a+1 = \pi_{c+1}=n-a+1$, which implies that $b=c$. Since $a>c$, this will only work if $1\leq a<\frac{n}{3}$ and so there are $\lfloor\frac{n-1}{3}\rfloor$ permutations of this form.

    Now, since we have $\lfloor \frac{n}{3}\rfloor + \lfloor \frac{n-1}{3}\rfloor + 1 = \lfloor \frac{2n+1}{3}\rfloor$ permutations of size $n$ with the property that $\pi_n=1$, the total number of permutations that avoid the chain $(213, 312: 213)$ satisfy the recurrence $c_n(213,312:213)=c_{n-1}(213,312:213)+\big\lfloor\frac{2n+1}{3}\big\rfloor$. Together with the easily-checked initial conditions, the result follows. 
\end{proof}

In the next theorem, we consider those permutations avoiding the chain $(213, 312: 231).$

\begin{theorem}\label{thm:uni-231}
    For $n\geq 1$, the number of permutations in $\S_n$ that avoid the chain $(213, 312 : 231)$ is \[\c_n(213,312: 231) =\bigg\lceil \dfrac{n(n+1)}{3}\bigg\rceil.\]
\end{theorem}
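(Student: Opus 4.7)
The plan is to mirror the proof of Theorem~\ref{thm:uni-213} by establishing the same recurrence $\c_n(213,312:231) = \c_{n-1}(213,312:231) + \lfloor(2n+1)/3\rfloor$, which together with small-$n$ initial values yields the closed form $\lceil n(n+1)/3\rceil$. Since $\pi$ is unimodal, either $\pi_1 = 1$ or $\pi_n = 1$. When $\pi_1 = 1$, writing $\pi = 1 \oplus \pi'$ and applying Lemma~\ref{lem:directsumsquare} gives $\pi^2 = 1 \oplus (\pi')^2$; since a leading $1$ cannot participate in a $231$ pattern (the smallest entry of $231$ must occupy its rightmost position), $\pi^2$ avoids $231$ iff $(\pi')^2$ does, yielding a bijection with $\C_{n-1}(213,312:231)$ that contributes $\c_{n-1}(213,312:231)$ to the count.

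The main task is to show that the $\pi_n = 1$ case contributes exactly $\lfloor(2n+1)/3\rfloor$. Following the strategy of Theorem~\ref{thm:uni-213}, I would first show that any such $\pi$ has the block form $(\iota_a \oplus \delta_b) \ominus \delta_c$ with $a+b+c=n$ and $c \geq 1$: if not, there exist a value $k$ and positions $1 < j < i < \ell$ with $\pi_j = k+1,\,\pi_i = n,\,\pi_\ell = k$, and one produces a $231$ pattern in $\pi^2$ among the values $\pi^2_1,\pi^2_j,\pi^2_i,\pi^2_\ell,\pi^2_n = \pi_{\pi_1},\pi_{k+1},1,\pi_k,\pi_1$ (the triple $\pi_{\pi_1},\pi_{k+1},1$ at positions $1,j,i$ is $231$ when $\pi_{\pi_1} < \pi_{k+1}$, and the triple $\pi_{\pi_1},\pi_k,\pi_1$ at positions $1,\ell,n$ is $231$ when $\pi_1 < \pi_{\pi_1} < \pi_k$); the remaining sub-cases combine with unimodality and $\pi_1 < k+1$ to force a $213$ or $312$ pattern in $\pi$ itself, contradicting $\pi \in \C_n(213,312)$. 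I would then compute $\pi^2$ block by block for $\pi = (\iota_a \oplus \delta_b) \ominus \delta_c$ and classify valid tuples in two regimes: when $b \leq 1$ the form simplifies to $\iota_k \ominus \delta_{n-k}$ with $k = a+b$, and direct inspection shows validity iff $k \leq \lceil n/2 \rceil$, giving $\lceil n/2 \rceil$ distinct permutations; when $b \geq 2$ a cycle analysis forces $a = c$ and $b \leq a+1$, yielding one additional valid permutation for each $b \in \{2,\ldots,\lfloor(n+2)/3\rfloor\}$ satisfying $b \equiv n \pmod{2}$. A brief arithmetic check confirms these two counts sum to $\lfloor(2n+1)/3\rfloor$.

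The principal obstacle is the $b \geq 2$ analysis: the block form induces a cycle structure on $\pi$ that threads across all three blocks (with a possible fixed point or transposition living inside the middle $\delta_b$-block), and pinning down both the necessity and sufficiency of $a = c$ and $b \leq a+1$ requires a careful cataloguing of all cross-block triples in $\pi^2$ that could form a $231$ pattern, together with ruling out additional valid tuples outside this family.
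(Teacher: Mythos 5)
Your proposal is correct and follows essentially the same route as the paper: the recurrence $\c_n = \c_{n-1} + \lfloor(2n+1)/3\rfloor$ via the $\pi_1=1$ versus $\pi_n=1$ split, the reduction of the $\pi_n=1$ case to the block form $(\iota_a\oplus\delta_b)\ominus\delta_c$, and then a finite enumeration of the valid parameter triples. The only difference is cosmetic: you index the valid permutations by the middle-block size $b$ (regimes $b\le 1$ and $b\ge 2$), whereas the paper indexes them by the value of $\pi_1$ (equivalently, by whether $n$ sits strictly before or exactly at position $\pi_1$); both counts agree with $\lfloor(2n+1)/3\rfloor$.
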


\begin{proof}
        We will show that the number of permutations avoiding $(213, 312 : 231)$ satisfies the recurrence $c_n(213,312:231)=c_{n-1}(213,312:231)+\big\lfloor\frac{2n-1}{3}\big\rfloor$. Together with the easily-checked initial conditions, the closed form in the statement of the theorem will follow. 
        
        First note that there are clearly $c_{n-1}(213,312:231)$ permutations in $C_n(213,312:231)$ with $\pi_1=1$ since any such permutation is of the form $1\oplus\pi'$ for some $\pi'\in\C_{n-1}(213,312:231).$ It is enough for us to show that there are exactly $\big\lfloor\frac{2n+1}{3}\big\rfloor$ permutations in $C_n(213,312:231)$ with $\pi_n=1$. 

        We first claim that any permutation avoiding the chain $(213,312 : 231)$ must be of the form $(\iota_a\oplus \delta_b)\ominus \delta_c$ for some $a+b+c=n$ where $\iota_a$ is the increasing permutation of length $a$ and $\delta_b$ and $\delta_c$ are the decreasing permutations of length $b$ and $c$, respectively. If $\pi$ were not of this form, there would be some  $k$ so that $\pi_j=k+1$, $\pi_i=n$ and $\pi_\ell=k$ with $1<j<i<\ell$. But then in $\pi^2$, we would have the subsequence $\pi^2_1\pi^2_j\pi^2_i\pi^2_\ell = \pi_{\pi_1}\pi_{k+1}1\pi_k$. 
        Since we necessarily have $i>2$ and $\pi^2_i=1$, we must have that $\pi^2_1\pi^2_2\ldots\pi^2_{i-1}$ is decreasing. This implies that $\{\pi^2_2,\ldots, \pi^2_{i-1}\}\subseteq\{\pi_{i+1}, \ldots\pi_{n}\}$. This furthermore implies that $k+1>i$, so $k\geq i$ and so $\pi_k>\pi_{k+1}.$ Since $\pi$ avoids 213 and 312, we must also have $\pi_1<\pi_{k+1}$. Since $\pi_1<\pi_{k+1}<\pi_k$, in $\pi^2$, we thus have $\pi^2_{j}\pi^2_{\ell}\pi^2_n = \pi_{k+1}\pi_k\pi_1$ is a 231 pattern, which is a contradiction.
        
        Next, let us notice that if $\pi_i=n$, then $i\leq\pi_1$. For the sake of contradiction, suppose $i>\pi_1$. Then $\pi_{\pi_1} = 2\pi_1-1$ since $\pi$ is of the form described in the previous paragraph. But then in $\pi^2,$ we have $\pi^2_1 = 2\pi_1-1\neq n$ and $\pi^2_n=\pi_1.$ Thus $(2\pi_1-1)n\pi_1$ is a 231 pattern in $\pi^2.$
        
        Consider the case when $\pi_i=n$. Then if $i<\pi_1$, we must have $\pi = \iota_i\ominus \delta_{n-i}$. That is, $\pi = (n-i+1)(n-i+2)\ldots (n-1)n(n-i)\ldots 21.$ Furthermore, if $\pi_1=(n-i-1)> i$, then each of these permutations avoids the chain $(213,312:231)$. 
        Indeed, if this were not the case, then we would have $i>1$, $\pi_i=n$, and $\pi_{i+1}=n-1$.
        Since $\pi$ is unimodal and starts with $\pi_1$, it must end with $(\pi_1-1)\ldots 21$. Therefore in $\pi^2,$ we have the sequence $\pi_{i+1}\pi_i\pi_{i-1} = (n-1)n\pi_{i-1}$, which must be a 231 pattern. 
        It is straightforward to see that if $\pi_1=(n-i-1)> i$, then $\pi = (n-i+1)(n-i+2)\ldots (n-1)n(n-i)\ldots 21$ avoids the chain $(213, 312:231)$ since $\pi^2$ is of the form $\delta_i\oplus\iota_{n-2i}\oplus \delta_{i}$, which has no 231 pattern. Thus there is one permutation for each $i<\frac{n-1}{2}$ and so there are $\lfloor \frac{n-1}{2}\rfloor$ such permutations.

        Finally, if $\pi_{\pi_1}=n,$ then there is exactly one permutation of the correct form, namely $\pi = \pi_1(\pi_1+1)\ldots(2\pi_1-2)n(n-1)\ldots(2\pi_1-1)(\pi_1-1)\ldots21$, and this permutation avoids the chain $(213, 312: 231)$ exactly when $\frac{n+2}{3}\leq\pi_1\leq \frac{n+2}{2}.$
        Indeed, if we must have $\pi_1\leq \frac{n+2}{2}$ in order for $\pi$ to simultaneously be of the correct form  and for 
        $\pi_{\pi_1}=n$.  Since $\pi = \pi_1(\pi_1+1)\ldots(2\pi_1-2)n(n-1)\ldots(2\pi_1-1)(\pi_1-1)\ldots21$, then $\pi^2$ must be equal to $\pi^2 = n(n-1)\ldots \pi_{2\pi_1-2}12\ldots \pi_{2\pi_1-1}\pi_{\pi_1-1}\ldots (\pi_1+1)\pi_1$. If $\frac{n+2}{3}>\pi_1,$ 
        then $n-\pi_1>2\pi_1-2$, which implies that $\pi^2$ has the form:
        \[\pi^2=\fbox{$n(n-1)\ldots(n-\pi_1+2)$}\fbox{$12\ldots (\pi-1)$}\fbox{$(2\pi_1-1)\ldots(n-\pi_1+1)$}\fbox{$(2\pi_1-2)\ldots (\pi_1+1)\pi_1$}\]
        where the blocks are sequences of consecutive numbers. 
        Notice that in $\pi^2,$ we have \[\pi^2_{n-\pi_1}\pi^2_{n-\pi_1+1}\pi^2_{n-\pi_1+2} = \pi_{2\pi-2}\pi_{2\pi-1}\pi_{\pi-1} =(n-\pi_1)(n-\pi_1+1)(2\pi_1-2)\] which is a 231 pattern since $2\pi_1-2<n-\pi_1$.

        Since we have shown that there is one permutation for each possible value of $\pi_1$ with $\frac{n+2}{3}<\pi_1\leq n$, we have $n-\lfloor\frac{n+1}{3}\rfloor = \lfloor \frac{2n+1}{3}\rfloor$ permutations with $\pi_n=1.$
\end{proof}

In the next theorem, we consider permutations that avoid the chain $(213,312:312)$. We could equivalently say that these permutations avoid 213 and strongly avoid 312. 

\begin{theorem}\label{thm:uni-312}
    For $n\geq 1$, the number of permutations in $\S_n$ that avoid the chain $(213, 312 : 312)$ is \[\c_n(213,312: 312)=\Big\lfloor \dfrac{n^2}{4}\Big\rfloor +1.\]
\end{theorem}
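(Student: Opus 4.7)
My plan is to prove the theorem by establishing the recurrence
\[
c_n(213,312:312) = c_{n-1}(213,312:312) + \lfloor n/2 \rfloor,
\]
which, combined with the base case $c_1 = 1$ and the identity $\sum_{k=2}^n \lfloor k/2 \rfloor = \lfloor n^2/4 \rfloor$, yields the claimed closed form by induction. To set up the recurrence, split $\pi \in \C_n(213,312:312)$ according to whether $\pi_1 = 1$. If $\pi_1 = 1$, then $\pi = 1 \oplus \pi'$, and Lemma~\ref{lem:directsumsquare} gives $\pi^2 = 1 \oplus (\pi')^2$; since a leading entry equal to $1$ can never serve as the large entry of a $312$ pattern, $\pi^2$ avoids $312$ exactly when $(\pi')^2$ does, and these permutations are in bijection with $\C_{n-1}(213,312:312)$. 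So the work lies in showing that the $\pi$ with $\pi_1 > 1$ (where unimodality forces $\pi_n = 1$) contribute exactly $\lfloor n/2 \rfloor$.

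For the $\pi_1 > 1$ case my claim will be that the valid permutations are precisely $\pi = \iota_t \ominus \delta_c$ with $t + c = n$ and $1 \leq t \leq \lfloor n/2 \rfloor$ (so $c \geq t$; the case $t = 1$ gives $\delta_n$). For sufficiency, a routine index computation will show that
\[
\pi^2 = \delta_t \oplus \iota_{c-t} \oplus \delta_t,
\]
whose three direct-sum blocks occupy the disjoint, strictly increasing value intervals $[1,t]$, $[t+1,c]$, and $[c+1,n]$; any $312$ pattern must therefore lie inside a single block, but each block is monotone and contains no $312$. This exhibits the $\lfloor n/2 \rfloor$ valid permutations claimed.

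The main challenge will be the converse: every other unimodal $\pi$ with $\pi_1 > 1$ has $\pi^2$ containing $312$. Write $v = \pi_1$, $c = v - 1$, and let $i$ be the position of $n$. I plan three regimes. First, if $v \leq i$, the strictly increasing prefix forces $\pi_v > v$, so positions $(1, i, n)$ of $\pi^2$ carry values $(\pi_v, 1, v)$, a $312$ pattern. Second, if $v > i$ but $c < \lceil n/2 \rceil$, then the tail splits into a high part (values $> c$ not used in the prefix, in decreasing order) and a low part $c, c-1, \ldots, 1$, and position $v = c + 1$ lies in the high part, forcing $\pi_v \geq c + 2 > v$ so the triple $(1, i, n)$ again witnesses $312$. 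Third, if $c \geq \lceil n/2 \rceil$ but $c < n - i$ (equivalently the high tail is nonempty, so $\pi$ is not of the claimed form), take $j^* \in [2, i]$ to be the smallest index with $\pi_{j^*} > c + j^*$ (which exists because $\pi_i = n > c + i$); then the minimality of $j^*$ makes $c + j^*$ the smallest high-tail value, so it occupies the last high-tail position $n - c$, and a careful check using the low-tail formula $\pi(k) = n + 1 - k$ shows that $(1, j^*, n - c)$ of $\pi^2$ carries values $(n - c,\, n + 1 - \pi_{j^*},\, n - c - j^* + 1)$, a $312$ pattern because $j^* \geq 2$ and $\pi_{j^*} > c + j^*$. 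The main obstacle will be the bookkeeping in the third regime: verifying that $v$, $\pi_{j^*}$, and $c + j^*$ all land in the low tail so the formula $\pi(k) = n + 1 - k$ is applicable, and confirming that $c + j^*$ is indeed the smallest high-tail value, placed at position $n - c$.
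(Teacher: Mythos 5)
Your proposal is correct and follows essentially the same route as the paper: the same recurrence $c_n(213,312:312) = c_{n-1}(213,312:312) + \lfloor n/2\rfloor$ via the $\pi_1=1$ split, the same identification of the $\lfloor n/2\rfloor$ valid permutations ending in $1$ as $\iota_t\ominus\delta_{n-t}$ with $t\le\lfloor n/2\rfloor$ (the paper writes them as $m(m+1)\cdots n(m-1)\cdots 1$ with $m>\lceil n/2\rceil$), and the same closed form. Your converse is organized as a three-regime case analysis with explicit $312$ witnesses in $\pi^2$, whereas the paper first proves a structural lemma forcing the form $\iota_t\ominus\delta_{n-t}$ and then rules out small $\pi_1$; the content is equivalent and all three of your regimes check out.
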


\begin{proof}
    We will show that the number of permutations avoiding $(213, 312 : 312)$ satisfies the recurrence $c_n(213,312:312)=c_{n-1}(213,312:312)+\big\lfloor\frac{n}{2}\big\rfloor$. Together with initial condition $c_3(213,312:312)=3$,  it follows that the number of permutations avoiding the chain $(213,312:312)$ has the closed form $c_n(213,312:312)=\big\lfloor \frac{n^2}{4}\big\rfloor +1$. 

 First, note that the permutations $\pi\in\C_n(213,312:312)$ with $\pi_1=1$ are of the form $\pi=1\oplus \pi'$ with $\pi'\in\C_{n-1}(213,312:312).$ Thus there are exactly $c_{n-1}(213,312:312)$ permutations $\pi\in\C_n(213,312:312)$ with $\pi_1=1$.
 
 Now assume $\pi$ is unimodal with $\pi_1=m>1$ and $\pi_i=n$. Let us first show that all elements that appear before $n$ are greater than all elements that appear after $n$. It is enough to show that $m=\pi_1$ is greater than $k=\pi_{i+1}$ since $m$ is the smallest element to the left of $n$ and $k$ is the largest element to the right of $n$. Since $\pi=m\ldots n k \ldots 1$, then $\pi^2=\pi_m\ldots1\pi_k\ldots m$. If $\pi^2$ avoids 312, we must have $\pi_m<m$. However, since $\pi_m<m$ and $m$ is the smallest element to the left of $n$, we must have $m>i$. But since $\pi$ is unimodal and $k>m$, we have $\pi_k<\pi_m$ and so $\pi_m1\pi_k$ is a 312 pattern in $\pi^2$. Thus $\pi$ must be of the form $m(m+1)\ldots n(m-1)\ldots 1$ for some $m\geq 2$.  

Suppose $m=\pi_1\leq \lceil \frac{n}{2}\rceil$. Since $\pi$ ends in 1, $\pi^2$ ends in $m$.  Also, since $m\leq \lceil \frac{n}{2} \rceil$, we have $\pi_m = 2m-1>m$. Thus in $\pi^2,$ we have the 312 pattern $\pi^2_1\pi^2_i\pi^2_n=\pi_m1m$.
Thus if $\pi$ ends in 1 it must be of the form $m(m+1)\ldots n(m-1)\ldots 1$ with $m > \lceil \frac{n}{2} \rceil$. Since \[\pi^2 = (n-m+1)(n-m)\ldots 21 (n-m+2)\ldots (m-1)n(n-1)\ldots m,\] which avoids 312, these permutations do indeed avoid the chain $(213,312:312)$. There are exactly $\lfloor \frac{n}{2} \rfloor$ of these permutations. Thus the recurrence $c_n(213,312:312)=c_{n-1}(213,312:312)+\big\lfloor\frac{n}{2}\big\rfloor$ holds, and the result follows.

\end{proof}

Finally, we consider permutations that avoid the chain $(213,312:321)$. 

\begin{theorem}\label{thm:uni-321}
    For $n\geq 3$, the number of permutations in $\S_n$ that avoid the chain $(213, 312 : 321)$ is \[\c_n(213,312: 321)=\Big\lfloor \dfrac{(n+3)^2}{4}\Big\rfloor -5.\]
\end{theorem}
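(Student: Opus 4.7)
The plan is to establish the recurrence $c_n(213,312:321) = c_{n-1}(213,312:321) + \lfloor(n+3)/2\rfloor$ for $n\geq 4$, which together with the directly-verified base case $c_3(213,312:321)=4$ implies the closed form by induction (one checks $\lfloor(n+3)^2/4\rfloor-\lfloor(n+2)^2/4\rfloor = \lfloor(n+3)/2\rfloor$). As in the earlier proofs of this section, Lemma~\ref{lem:directsumsquare} shows that permutations $\pi\in\C_n(213,312:321)$ with $\pi_1=1$ are exactly those of the form $1\oplus\pi'$ for $\pi'\in\C_{n-1}(213,312:321)$, contributing $c_{n-1}$. So it suffices to count unimodal $\pi$ with $\pi_1>1$ (and hence $\pi_n=1$) and show there are $\lfloor(n+3)/2\rfloor$ of them.

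Let $k$ denote the position of $n$ in $\pi$. The boundary cases $k=1$ (the reverse permutation, whose square is the identity) and $k=n-1$ (forcing $\pi = 23\cdots n1$, with $\pi^2 = 34\cdots n12$ avoiding $321$) each contribute a single working permutation. For $k=2$, the permutation must take the form $\pi = m, n, n-1, \ldots, m+1, m-1, \ldots, 1$ for some $m\in\{2,\ldots,n-1\}$. I would compute $\pi^2$ in the two ranges $m\leq(n+1)/2$ and $m\geq(n+2)/2$ and, in the spirit of Theorems~\ref{thm:uni-312} and~\ref{thm:uni-231}, show that $\pi^2$ avoids $321$ exactly when $m\geq\lceil(n+1)/2\rceil$. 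This contributes $n - \lceil(n+1)/2\rceil$ permutations, so combined with the two boundary cases the total is $\lfloor(n+3)/2\rfloor$.

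The main obstacle is showing that no working permutation exists when $3\leq k\leq n-2$. The key observation is $\pi^2_k=\pi(n)=1$, so any descent among $\pi^2_1,\ldots,\pi^2_{k-1}$ extends with this $1$ to a $321$ pattern. Writing $a_i=\pi_i$ for $i\leq k$ (so $a_1<\cdots<a_{k-1}<a_k=n$) and $b_1>\cdots>b_{n-k}=1$ for the values of $\pi$ in positions $k+1,\ldots,n$, I would let $t$ be the largest index in $[k-1]$ with $a_t\leq k$ and split into four subcases. (i) If $t\leq k-3$ then $\pi^2_i=b_{a_i-k}$ for $i\in\{t+1,\ldots,k-1\}$, and since $a_i-k$ is increasing while the $b_j$ are decreasing, $\pi^2_{t+1}>\pi^2_{t+2}>\cdots>\pi^2_{k-1}$; then $(\pi^2_{t+1},\pi^2_{t+2},\pi^2_k)$ is a $321$. (ii) If $t=k-2$ with $a_{k-2}=k$, then $\pi^2_{k-2}=\pi(k)=n$ while $\pi^2_{k-1}\leq n-1$, so $(\pi^2_{k-2},\pi^2_{k-1},\pi^2_k)$ is a $321$. (iii) If $t=k-2$ with $a_{k-2}=k-1$, then $a_i=i+1$ for $i\leq k-2$ and $A=\{2,\ldots,k-1,s,n\}$ for some $s\in\{k+1,\ldots,n-1\}$; splitting on whether $s\geq\lceil(n+k)/2\rceil$ (in which case $\pi^2_{k-2}=s>\pi^2_{k-1}$, another descent) or $s\leq\lfloor(n+k-1)/2\rfloor$, the latter admits a direct calculation giving $\pi^2_{k-1}=n-s+k$, $\pi^2_{k+1}=k$, and $\pi^2_n=2$, so that $(n-s+k,k,2)$ is a $321$ at positions $(k-1,k+1,n)$. (iv) If $t=k-1$, then $A=\{2,3,\ldots,k,n\}$ and $\pi = 2\,3\cdots k\,n\,(n-1)\cdots(k+1)\,1$, whose square is $3\,4\cdots k\,n\,1\,(k+1)\cdots(n-1)\,2$, containing the $321$ pattern $(n,n-1,2)$ at positions $(k-1,n-1,n)$ (using $k\leq n-2$). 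Combining all cases yields the recurrence, and hence the closed form.
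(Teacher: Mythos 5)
Your proposal is correct, and its skeleton matches the paper's exactly: the same recurrence $c_n = c_{n-1} + (\lceil n/2\rceil+1)$ via the $1\oplus\pi'$ decomposition, the same two boundary permutations $n(n-1)\cdots 21$ and $23\cdots n1$, and the same surviving family $m\,n\,(n-1)\cdots(m+1)(m-1)\cdots 1$ with $m>\lfloor n/2\rfloor$ (your threshold $m\geq\lceil(n+1)/2\rceil$ is the same condition). Where you genuinely diverge is in ruling out everything else. The paper cases on the first two letters of $\pi$ and exploits the fact that $\pi$ ends in $21$ whenever $\pi_1\geq 3$, so that $\pi^2$ ends in $\pi_2\pi_1$ and the value $n$ sitting earlier in $\pi^2$ completes a $321$; the residual case $\pi_1=2$, $\pi_2<n$ is then handled by a short ad hoc argument. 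You instead case on the position $k$ of $n$ and exploit $\pi^2_k=1$, hunting for a descent among $\pi^2_1,\dots,\pi^2_{k-1}$ via the parameter $t$. Your version is more systematic and I checked cases (i)--(iv) in detail (including the formulas $\pi^2_{k-1}=n-s+k$, $\pi^2_{k+1}=k$, $\pi^2_n=2$ in case (iii)); they are all correct. The paper's version is shorter because the single observation ``$\pi^2$ ends in $\pi_2\pi_1$'' kills most cases at once. Two small points to tighten: (a) your $t$ may fail to exist (e.g.\ $\pi=5674321$ has every $a_i>k$ for $i<k$); setting $t=0$ there makes case (i) apply verbatim since $0\leq k-3$, but you should say so; (b) the $k=2$ computation is the one place where you defer the work (``I would compute\dots''), and while the claimed threshold is correct and matches Theorem~\ref{thm:uni-312}'s style of argument, that verification is where the actual count $\lfloor(n+3)/2\rfloor$ comes from, so it needs to be written out.
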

\begin{proof}
    We will show that the number of permutations avoiding $(213, 312 : 321)$ satisfies the recurrence $c_n(213,312:321)=c_{n-1}(213,312:321)+\big\lceil\frac{n}{2}\big\rceil+1$, which together with the initial condition $c_3(213,312:321)=4$ has closed form $\big\lfloor \frac{(n+3)^2}{4}\big\rfloor -5$. 

    First, note that the permutations $\pi\in\C_n(213,312:321)$ with $\pi_1=1$ are of the form $\pi=1\oplus \pi'$ with $\pi'\in\C_{n-1}(213,312:321).$ Thus there are exactly $c_{n-1}(213,312:321)$ permutations $\pi\in\C_n(213,312:321)$ with $\pi_1=1$.
    Let us also note that the two permutations  $\pi = 23\ldots (n-1)n1$ with $\pi^2 = 34\ldots (n-1)n12$ and the permutation $\pi=n(n-1)\ldots 21$ with $\pi^2=12\ldots (n-1)n$ avoid the chain $(213,312:321).$
   
    Next we show that all other permutations in $\C_n(213,312: 321)$ must be of the form \[\pi=kn(n-1)\ldots (k+1)(k-1) \ldots 21\] with $\lfloor \frac{n}{2} \rfloor<k<n$. Since $k> \lfloor \frac{n}{2} \rfloor$, we must have $\pi_k\leq k+1$. Thus this permutation does avoid the chain $(213,312:321)$ since \[\pi^2 =\pi_k 12\ldots (\pi_k-1)(\pi_k+1)\ldots (k-1)(k+1)\ldots (n-1)nk\] must avoid 321. Furthermore, there are $\lfloor\frac{n-1}{2}\rfloor$ such permutations.

    Now, let us show that there are no other unimodal permutations whose square avoids 321. Since we have already considered the case where $\pi_1=n,$ let us assume $\pi_1\neq n$ and thus $\pi_1<\pi_2$. If $\pi_2=n,$ then either $\pi$ is of the form described above, or it is of the form $\pi=kn(n-1)\ldots 1$ with $k\leq\lfloor\frac{n}{2}\rfloor$. Then since $\pi_k$ must be greater than $k+1$, and  $\pi_1^2=\pi_k$ and $\pi^2_n=k$,  we are guaranteed a 321 pattern in $\pi^2$. 

    Now assume $\pi$ begins with $jk$ such that $2< j<k<n$. That is, $\pi=jk\ldots n\ldots21$. Then $\pi^2$ must end in $kj$ and so there is a 321 pattern in $\pi^2,$ namely $nkj$. Now suppose $\pi_1=2$ and $\pi_2<n$. We have already seen that $\pi=23\ldots(n-1)n1\in\C_n(213,312:321)$, so suppose that $b=\pi_{n-1}\neq n$.
    That is, $\pi=2 3 \ldots (\pi_b-1)\ldots n\ldots b1$ and thus $\pi^2$ ends in $\pi_b2$. Since $\pi^2$ ends with 2, the only way for it to avoid 321 is for all elements except 1 and 2 to be in increasing order, and in particular, we need $\pi_b=n$. 
    However, if $\pi_b=n$, we would have $\pi = 23\ldots (b-1)\pi_{b-1}n(n-1)\ldots b1$. 
    This implies that $\{\pi^2_1, \ldots, \pi^2_{b-3}\} = \{3,4,\ldots, b-1\}$, $\pi^2_b=1$,  $\pi^2_n=2$, and either $\pi^2_{b-1}$ or $\pi^2_{b+1}$ is equal to $b$ since $\pi_{n-1}=b$.
    And so, we must have $\pi^2_{b-2}>b$ and thus $\pi^2_{b-2}b2$ is a 321 pattern in $\pi^2.$

    Therefore there are $\big\lfloor\frac{n-1}{2}\big\rfloor+2 = \big\lceil\frac{n}{2}\big\rceil+1$ permutations which do not start with 1 that avoid the chain $(213, 312 : 321)$, and so the claim is proven.
\end{proof}

\section{Avoiding the chain \texorpdfstring{$(312:\tau)$}{(312:tau)}}\label{312}

In this section, we find a lower bound for the number of permutations that avoid the chain $(312:\tau)$ for $\tau\in\S_3$.

First let us observe that if $\pi\in\Av_n(312)$, then $\pi = \alpha\oplus\beta$ where $\alpha\in\Av_i(312)$ for $1\leq i\leq n$ with $\alpha_i=1$ and $\beta\in\Av_{n-i}(312)$. Therefore, in this case $\pi^2 = \alpha^2\oplus\beta^2$. 
For convenience, we will define $a_n(\sigma:\tau)$ to be the number of permutations $\pi\in\S_n$ avoiding the chain $(\sigma:\tau)$ that have the additional property that $\pi_n=1$ (i.e., the permutation ends in 1).

In \cite{BS19}, B\'{o}na and Smith find the exact number of permutations that avoid the chain $(312:312)$ (equivalently, that strongly avoid the pattern 312). They do this by recognizing the permutation as a direct sum $\pi = \alpha\oplus\beta$ where $\alpha$ ends in 1, as described in the paragraph above. Since any 312 pattern must appear completely in the $\alpha^2$ or the $\beta^2$ part of $\pi^2$, knowing the number of permutations strongly avoiding 312 that end in 1 is enough to enumerate all permutations that strongly avoid 312. They find that the number of strongly 312-avoiding permutations that end in 1 are exactly the unimodal permutations whose square avoids 312 that end in 1. Using these facts, they prove the following theorem. 

\begin{theorem}[\cite{BS19}]\label{thm:bona-smith}
    For $n\geq 1$, the generating function for $c_n(312:312)$ is given by \[b(x) = \dfrac{1 - x - x^2 + x^3}{1 - 2 x - x^2 +2 x^3 - x^4}.\]
\end{theorem}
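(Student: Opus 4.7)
The plan is to follow the roadmap of B\'ona and Smith in \cite{BS19}: use a direct-sum decomposition to reduce the problem to counting strongly 312-avoiding permutations that end in $1$, identify that count via the unimodal analysis of Section~\ref{312,213}, and then assemble a rational generating function.

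First, I would formalize the block decomposition hinted at just before the theorem. Every permutation decomposes uniquely into sum-indecomposable blocks, and for $\pi \in \Av_n(312)$ each block is again a 312-avoider. A short pigeonhole argument shows a sum-indecomposable 312-avoider of length $\ge 1$ must end in $1$: if $1$ sits at position $i < n$, indecomposability forces some value $u > i$ to appear at a position $j < i$, and then any value $v \in \{2, \ldots, i\}$ appearing after position $i$ produces the 312 pattern $u, 1, v$. Conversely, any permutation ending in $1$ is automatically sum-indecomposable. Writing $\pi = \alpha \oplus \beta$ with $\alpha$ the leftmost block, Lemma~\ref{lem:directsumsquare} yields $\pi^2 = \alpha^2 \oplus \beta^2$, and because the values of $\alpha^2$ all lie below those of $\beta^2$ no 312 pattern in $\pi^2$ can straddle the two blocks. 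Hence $\pi$ strongly avoids $312$ iff both $\alpha$ and $\beta$ do. Setting $A(x) = \sum_{n \ge 1} a_n(312:312)\, x^n$ and $C(x) = \sum_{n \ge 0} c_n(312:312)\, x^n$ with $c_0 = 1$, this gives
\[
C(x) \;=\; \frac{1}{1 - A(x)}.
\]

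Second, I would compute $A(x)$. The key structural input, taken from \cite{BS19}, is that a strongly 312-avoiding permutation ending in $1$ must be unimodal. Granted this, $a_n(312:312)$ counts exactly the unimodal permutations of length $n$ ending in $1$ whose square avoids $312$, and the proof of Theorem~\ref{thm:uni-312} has already identified these as the permutations $m(m{+}1)\cdots n(m{-}1)\cdots 1$ with $\lceil n/2 \rceil < m \le n$. Hence $a_n = \lfloor n/2 \rfloor$ for $n\ge 2$, together with $a_1 = 1$ from the trivial permutation. Splitting the sum by parity,
\[
A(x) \;=\; x + (1+x)\sum_{k \ge 1} k\, x^{2k} \;=\; x + \frac{x^2}{(1-x)^2(1+x)} \;=\; \frac{x - x^3 + x^4}{1 - x - x^2 + x^3}.
\]
A short simplification gives $1 - A(x) = (1 - 2x - x^2 + 2x^3 - x^4)/(1 - x - x^2 + x^3)$, and inverting produces the claimed $b(x)$.

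The main obstacle is the unimodality statement cited in the second step: from a 312-avoider that ends in $1$ and contains a $213$ pattern one has to produce a forced $312$ in $\pi^2$, which requires carefully tracking the image of a witness $213$ under the permutation. This is the analytical heart of the argument and is provided by \cite{BS19}. Everything else is direct-sum bookkeeping using Lemma~\ref{lem:directsumsquare} together with the rational-function computation above.
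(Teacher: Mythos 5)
Your argument is correct and follows exactly the route the paper attributes to B\'ona and Smith: the paper states this theorem as a citation of \cite{BS19} without proof, but the paragraph preceding it outlines precisely your strategy (decompose a $312$-avoider into blocks ending in $1$, invoke the fact that strongly $312$-avoiding permutations ending in $1$ are the unimodal ones counted in Theorem~\ref{thm:uni-312}, and assemble $b(x)=1/(1-A(x))$), and your generating-function computation matches the one the paper itself carries out in the proof of Theorem~\ref{thm:312-231}.
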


In this section, we find that the unimodal permutations whose square avoids the pattern $\tau$ and that end in 1 do not completely describe all permutations ending in 1 and avoiding the chain $(312:\tau)$ for $\tau\in\S_3\setminus \{312\}$, as they did in the 312 case. However, these permutations do allow us to find a lower bound for the enumeration of $\C_n(312:\tau)$. 
In Table~\ref{table:312}, we give the initial values of $c_n(312:\tau)$ for $\tau\in\{132,213,231,321\}$ along with the initial values of the lower bound found in this section.

\renewcommand{\arraystretch}{2}
 \begin{table}[ht]
            \begin{center}
\begin{tabular}{|c|cccccccccc|c|c|}
    \hline
      $n$ & 3& 4& 5& 6& 7 &8 &9&10&11&12 & Theorem\\ \hline\hline
    $c_n(312:132)$ & 5& 11& 23& 49& 102& 206& 419& 849& 1704& 3420 
& \multirow{ 2}{*}{Theorem~\ref{thm:312-132}}  \\ \cline{1-11}
    $2^{n-1}$ & 4 & 8 & 16 & 32 & 64 & 128 & 256 & 512 & 1024 & 2048& \\ \hline\hline
	$c_n(312:213)$ & 5& 11& 23& 49& 102& 206& 419& 849& 1704& 3420 & \multirow{ 2}{*}{Theorem~\ref{thm:312-213}}\\ \cline{1-11}
 $\lfloor \frac{10}{7}\cdot 2^{n-1}\rfloor$ & 5& 11& 22& 45& 91& 182& 365& 731& 1462& 2925 &\\ \hline\hline
	$c_n(312:231)$ & 5& 13& 30& 70& 167& 395& 932 &2206 &5217& 12334 & \multirow{ 2}{*}{Theorem~\ref{thm:312-231}}\\ \cline{1-11}
	 $b_n$ & 4& 9& 19& 41& 87& 186& 396 & 845 & 1801 & 3841 &\\ \hline\hline
	$c_n(312:321)$ &5& 12& 29& 68& 160& 378& 891& 2101& 4954& 11683 & \multirow{ 2}{*}{Theorem~\ref{thm:312-321}}\\ \cline{1-11}
 $d_n$ &5& 11& 25& 56& 126& 283& 636& 1429& 3211& 7215 &\\ \hline
\end{tabular}
            \end{center}
            \caption{Values of $c_n(312:\tau)$ for $\tau\in\{132,213,231,321\}$ and $3\leq n\leq 12$, together with their lower bounds as given in the corresponding theorems.}
            \label{table:312} 
      \end{table}

Let us first consider the case where $\tau=123$. In this case, a decomposition of $\pi$ into a direct sum is not possible since all such permutations must end in 1. Thus the lower bound attained in this case is obtained directly from Theorem~\ref{thm:uni-123}.

\begin{theorem}\label{thm:312-123}
Let $n\geq 8.$ Then $c_n(312:123)\geq \lfloor \frac{n}{2}\rfloor.$
\end{theorem}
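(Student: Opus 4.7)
The plan is essentially a one-line containment argument. Every permutation in $\C_n(213,312:123)$ lies in $\C_n(312:123)$ trivially, since avoiding both $213$ and $312$ is a strictly stronger condition on $\pi$ than avoiding $312$ alone, while the condition imposed on $\pi^2$ is the same in both chains. Hence $\C_n(213,312:123)\subseteq \C_n(312:123)$ as sets, which gives $c_n(312:123)\geq c_n(213,312:123)$. Invoking Theorem~\ref{thm:uni-123} to evaluate the right-hand side at $\lfloor n/2\rfloor$ for $n\geq 8$ then yields the stated bound immediately.

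There is no real obstacle here: the theorem is a direct corollary of the unimodal enumeration established in the previous section. The reason it is recorded separately, rather than being sharpened in the style of the theorems that follow, is the remark preceding the statement. The usual direct-sum trick $\pi=\alpha\oplus\beta$ (with $\alpha$ ending in $1$), which would let one piggyback on an arbitrary $312$-avoider $\beta$ via $\pi^2=\alpha^2\oplus\beta^2$ (using Lemma~\ref{lem:directsumsquare}), fails in the $\tau=123$ case: as soon as both $\alpha^2$ and $\beta^2$ are nonempty, any entry of $\alpha^2$ together with any two ascending entries of $\beta^2$ would form a $123$ pattern, so $\pi$ cannot split nontrivially. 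Consequently the only permutations in $\C_n(312:123)$ that this construction delivers are those coming from the unimodal enumeration itself, and the bound $\lfloor n/2\rfloor$ is the best that can be extracted by this method.
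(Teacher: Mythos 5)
Your argument is correct and matches the paper's own reasoning: the paper likewise obtains this bound directly from Theorem~\ref{thm:uni-123} via the trivial containment $\C_n(213,312:123)\subseteq \C_n(312:123)$, noting that the direct-sum decomposition is unavailable here because every permutation in $\C_n(312:123)$ must end in $1$. Nothing further is needed.
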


This case certainly appears to be the strangest, as it appears the sequence is not even increasing. The sequence $c_n(312:123)$ for $n\in[12]$ is given by: 
\[
1,2,1,4,7,12,11,29,26,50,41,108
\]
and it is not immediately clear how to attain a recursive lower bound, as we have for other $\tau\in\S_3$ in the remainder of this section.

Next, let us consider the case when $\tau=132$. In this case, we can decompose the permutation as described above and can use Theorem~\ref{thm:312-132} to attain a lower bound for $c_n(312:132).$ Recall that $a_n(\sigma:\tau)$ is the number of permutations in $\C_n(\sigma:\tau)$ that end with $\pi_n=1$.

\begin{lemma}\label{lem:312-132}
Let $n\geq 3$. Then,
\[
c_n(312:132) = a_n(312:132)+\sum_{i=1}^{n-1} a_i(312:132)\cdot2^{n-1-i} .
\]
\end{lemma}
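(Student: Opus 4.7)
The plan is to use the decomposition of every 312-avoider into sum-indecomposable blocks. As noted in the paragraph immediately preceding the lemma, any $\pi\in\Av_n(312)$ splits as $\pi=\alpha\oplus\beta$ with $\alpha$ ending in $1$; iterating this splitting on $\beta$ produces a fine decomposition $\pi=\gamma_1\oplus\gamma_2\oplus\cdots\oplus\gamma_k$ in which each normalized block $\gamma_j$ is a sum-indecomposable 312-avoider, which forces $\gamma_j$ to end in $1$. Lemma~\ref{lem:directsumsquare} then gives $\pi^2=\gamma_1^2\oplus\gamma_2^2\oplus\cdots\oplus\gamma_k^2$, so the entire analysis reduces to understanding when a direct sum of the $\gamma_j^2$ avoids $132$.

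Next I would analyze when this direct sum avoids $132$. Since the values used by $\gamma_j^2$ lie strictly below those used by $\gamma_{j'}^2$ whenever $j<j'$, no 132 pattern can span three distinct blocks (the middle value of the pattern would have to appear to the right of the largest, which is impossible). The only way a 132 can span two blocks is for its smallest entry to lie in an earlier block and for a descent (the $3,2$ of the pattern) to lie inside a single later block. Consequently $\pi^2$ avoids $132$ if and only if $\gamma_1^2$ avoids $132$ and each $\gamma_j^2$ for $j\geq 2$ is the identity, i.e., each such $\gamma_j$ is an involution.

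The core classification step is to identify the 312-avoiding involutions of length $m$ ending in $1$. Ending in $1$ together with being an involution gives $\gamma_j(m)=1$ and therefore $\gamma_j(1)=m$, so the maximum sits at position $1$. To avoid $312$ with the maximum leading, positions $2$ through $m$ must be strictly decreasing, forcing $\gamma_j=m(m-1)\cdots 21$. Thus for each length $m\geq 1$ there is exactly one allowable $\gamma_j$, and the tuple $(\gamma_2,\ldots,\gamma_k)$ is determined entirely by its sequence of lengths, that is, by a composition of $n-i$ into positive parts, where $i$ is the length of $\gamma_1$.

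Finally I would enumerate. Permutations with $\pi_n=1$ correspond to $k=1$ and contribute $a_n(312:132)$. For $k\geq 2$, there are $a_i(312:132)$ choices for $\gamma_1$ of length $i$ with $1\leq i\leq n-1$, followed by $2^{n-1-i}$ compositions of $n-i$ describing the uniquely-determined decreasing blocks $\gamma_2,\ldots,\gamma_k$. Summing produces the asserted identity. The main obstacle is the careful direct-sum case analysis for $132$-avoidance together with verifying that for each length the unique 312-avoiding involution ending in $1$ is the reverse permutation.
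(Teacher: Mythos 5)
Your proposal is correct and follows essentially the same route as the paper: split off the prefix ending in $1$ (your $\gamma_1$, the paper's $\alpha$), observe that a $132$ can only cross blocks with its ``$1$'' in an earlier block and its descent in a later one, conclude that everything after the first block must square to the identity, and count the resulting $312$-avoiding involutions (direct sums of decreasing blocks) by compositions to get the factor $2^{n-1-i}$. The only cosmetic difference is that you carry out the full decomposition into indecomposable blocks and identify each later block as a single decreasing permutation, whereas the paper lumps them into one involution $\beta$ and then decomposes it; the content is identical.
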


\begin{proof}
    Suppose $\pi= \alpha\oplus\beta\in\Av_n(312)$. 
Let us first notice that the only way for a 132 pattern to appear in $\pi^2 = \alpha^2\oplus\beta^2$ is for there to either be a 132 pattern in $\alpha^2$ or in $\beta^2$, or for the ``1'' of the 132 pattern to be in $\alpha^2$ and the ``3'' and ``2'' to be in $\beta^2$. 
Therefore, in order for $\pi^2$ to avoid 132, we need $\alpha^2$ to avoid 132 and for $\beta^2$ to be the increasing permutation (since $\alpha^2$ is non-empty). Thus $\alpha$ is a permutation avoiding the chain $(312:132)$ that ends in 1 and $\beta$ is an involution avoiding $312$. 
Since any involution avoiding 312 of length $n$ must be of the form $\bigoplus_{i=1}^k \delta_{m_i}$ for some composition $m_1+m_2+\ldots+m_k=n$, where $\delta_{m}=m(m-1)\ldots 321$, these permutations are in bijection with compositions, and thus there are $2^{n-1}$ such permutations. 

Since there are $a_n(312:132)$ elements in $\C_n(312:132)$ that have $1$ in the $n$-th position and $a_i(312:132)\cdot 2^{n-i-1}$ such permutations that have 1 in the $i$-th position, the result follows. 
\end{proof}

\begin{theorem}\label{thm:312-132}
Suppose $n\geq 3$. Then,
$c_n(312:132)\geq 2^{n-1}$.
\end{theorem}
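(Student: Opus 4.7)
The plan is to apply Lemma~\ref{lem:312-132} and exhibit, for every length, at least one permutation in $\C_n(312:132)$ that ends in $1$. That alone is enough to force the geometric sum in the lemma to give $2^{n-1}$.

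First I would lower-bound $a_i(312:132)$. The decreasing permutation $\delta_i = i(i-1)\cdots 21$ avoids $312$ (it is monotone decreasing) and satisfies $\delta_i^2 = 12\cdots i$, which is the identity and hence avoids $132$. Since $\delta_i$ ends in $1$, we conclude $a_i(312:132) \geq 1$ for every $i \geq 1$.

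Next I would substitute this into the identity of Lemma~\ref{lem:312-132}:
\[
c_n(312:132) \;=\; a_n(312:132) + \sum_{i=1}^{n-1} a_i(312:132)\cdot 2^{n-1-i} \;\geq\; 1 + \sum_{i=1}^{n-1} 2^{n-1-i}.
\]
The geometric sum evaluates to $2^{n-1}-1$, so the right-hand side is $1 + (2^{n-1}-1) = 2^{n-1}$, which is exactly the bound claimed.

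There is no real obstacle here; the work was done in Lemma~\ref{lem:312-132}. The only thing to verify is the base case $a_i \geq 1$, which the decreasing permutation handles uniformly for all $i$. (In fact, for small $i$ one could list other candidates as well, but a single witness per length is all the lemma needs.) Thus the conclusion $c_n(312:132) \geq 2^{n-1}$ follows immediately.
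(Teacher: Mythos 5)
Your proof is correct and follows the same route as the paper: establish $a_i(312:132)\geq 1$ for all $i$ and feed this into Lemma~\ref{lem:312-132} to obtain the geometric sum $1+\sum_{i=1}^{n-1}2^{n-1-i}=2^{n-1}$. The only cosmetic difference is that you exhibit the decreasing permutation $\delta_i$ as an explicit witness, whereas the paper cites the proof of Theorem~\ref{thm:uni-132} for the existence of such a permutation ending in $1$.
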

\begin{proof}
    We know from the proof of Theorem~\ref{thm:uni-132} that there is always one permutation avoiding the chain $(312,213:132)$ and ending in 1. Since this permutation must also avoid the chain $(312:132)$, we know there is always at least one permutation that avoids the chain $(312:132)$ and that ends in 1. Thus, by Lemma~\ref{lem:312-132}, $c_n(312:132)\geq 1+\sum_{i=1}^{n-1} 2^{n-1-i} = 2^{n-1}.$
\end{proof}

Now, let us consider the case when $\tau=213.$

\begin{lemma}\label{lem:312-213}
Let $n\geq 3$. Then,
\[
c_n(312:213) =  a_n(312:213)+\sum_{i=1}^{n-1} a_i(312:213)\cdot 2^{n-1-i}.
\]
\end{lemma}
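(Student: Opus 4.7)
My plan is to mimic the proof of Lemma~\ref{lem:312-132}, but with a dualized boundary condition. For $\pi = \alpha \oplus \beta \in \Av_n(312)$, Lemma~\ref{lem:directsumsquare} gives $\pi^2 = \alpha^2 \oplus \beta^2$, so any 213 pattern in $\pi^2$ either lies entirely inside one summand or straddles the boundary. A straddling 213 pattern must place its descent ``21'' inside $\alpha^2$ and its ``3'' inside $\beta^2$; the mirror configuration is forbidden because the ``2'' in $\alpha^2$ would need to exceed the ``1'' living in the strictly larger block $\beta^2$. Therefore $\pi^2$ avoids 213 iff $\alpha^2$ and $\beta^2$ both avoid 213 and, whenever $\beta$ is nonempty, $\alpha^2$ has no descent, i.e.\ $\alpha$ is an involution.

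I will then apply this criterion to the full sum-indecomposable decomposition $\pi = \gamma_1 \oplus \cdots \oplus \gamma_k$. For 312-avoiders, a block is sum-indecomposable exactly when it ends in $1$ (otherwise the position of the value $1$ forces a further split), so each $\gamma_j$ ends in $1$. The criterion at every cut forces $\gamma_j$ to be an involution for $j < k$; combined with being sum-indecomposable and 312-avoiding, this pins $\gamma_j = \delta_{m_j}$ (a decreasing block), since the decreasing permutations are the only sum-indecomposable 312-avoiding involutions. The last piece $\gamma_k$ may be any sum-indecomposable element of $\C_{m_k}(312:213)$, i.e.\ any chain-avoider of length $m_k$ ending in $1$; there are $a_{m_k}(312:213)$ such choices.

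To finish, writing $i = m_k$, the preceding $\delta$-blocks are in bijection with the compositions of $n - i$, giving $2^{n-1-i}$ choices when $1 \leq i < n$ and a single (empty) composition when $i = n$. Summing produces the stated identity. The main subtlety — and the crucial difference from the 132 case — is that here $\alpha$ (not $\beta$) is forced to be the involution, so the chain-avoiding piece sits at the \emph{end} of the decomposition; a naïve binary split with $\alpha$ ending in $1$ would misidentify which summand must be an involution and miscount individual decomposition classes, even though the total comes out right.
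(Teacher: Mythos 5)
Your proof is correct and follows essentially the same route as the paper: both rest on the observation that a 213 pattern in $\alpha^2\oplus\beta^2$ can only straddle the boundary as ``21''$\mid$``3'', forcing $\alpha$ to be a $312$-avoiding involution ending in $1$ (hence decreasing) whenever $\beta$ is nonempty; the paper unrolls the resulting recurrence $c_n = a_n + \sum_{i=1}^{n-1} c_{n-i}$ while you count the block decomposition $\delta_{m_1}\oplus\cdots\oplus\delta_{m_{k-1}}\oplus\gamma_k$ directly via compositions, which is a cosmetic difference. One small quibble: your closing remark that a ``binary split with $\alpha$ ending in $1$ would misidentify which summand must be an involution'' is not quite right --- that is exactly the split the paper uses, and it correctly forces $\alpha$ (the block containing the value $1$) to be the involution --- but this does not affect the validity of your argument.
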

\begin{proof}
    Suppose $\pi= \alpha\oplus\beta\in\Av_n(312)$. Let us first notice that the only way for a 213 pattern to appear in $\pi^2=\alpha^2\oplus\beta^2$ is for there to either be a 213 pattern in $\alpha^2$ or in $\beta^2$, or for the ``2'' and ``1'' of the 213 pattern to be in $\alpha^2$ and the ``3'' to be in $\beta^2$. Therefore, in order for $\pi^2$ to avoid 213, we need $\beta^2$ to avoid 213 and for $\alpha^2$ to avoid 213, and furthermore  we need $\alpha^2$ to be the increasing permutation when $\beta$ is nonempty. Thus, if $\beta$ is nonempty, then $\alpha$ is an involution avoiding $312$ the ends in $1$. However, the only involution that avoids 312 and ends in 1 is the decreasing permutation, so $\alpha$ must be decreasing when $\beta$ is nonempty. Therefore, either $\beta$ is empty, in which case the 1 appears at the end of the permutation and so there are $a_n(312:213)$ choices for $\alpha$, or $\beta$ is nonempty, in which case the 1 appears in the $i$-th position for some $1\leq i\leq n-1$, in which case there is one choice for $\alpha$ and $c_{n-i}$ choices for $\beta$. By reindexing, we get: \[c_n(312:213) =  a_n(312:213)+\sum_{i=1}^{n-1} c_i(312:213).\] By applying this recurrence to $c_{n-1}(312:213)$, we get \[c_n(312:213) =  a_n(312:213) + a_{n-1}(312:213)+\sum_{i=1}^{n-2} 2c_i(312:213).\] Applying it again to $c_{n-2}$ we get: \[c_n(312:213) =  a_n(312:213) + a_{n-1}(312:213)+2a_{n-2}(312:213) + \sum_{i=1}^{n-3} 4c_i(312:213).\] Continuing in this way to eliminate all copies of $c_i(312:213)$ from the right hand side, we get the recurrence in the lemma. 
\end{proof}

\begin{theorem}\label{thm:312-213}
Let $n\geq 5$. Then,
$c_n(312:213)\geq \lfloor\frac{10}{7}\cdot 2^{n-1}\rfloor$.
\end{theorem}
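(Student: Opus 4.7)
The plan is to combine Lemma~\ref{lem:312-213} with a lower bound on the values $a_i(312:213)$. Recall that $a_i(312:213)$ counts permutations avoiding the chain $(312:213)$ that end in $1$. Since any unimodal permutation ending in $1$ whose square avoids $213$ is in particular such a permutation, $a_i(312:213)$ is at least the number of these unimodal permutations. From the proof of Theorem~\ref{thm:uni-213}, this count is exactly $\lfloor(2i+1)/3\rfloor$ (it appears there as the contribution to $c_i(213,312:213)$ from permutations with $\pi_i=1$).

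Substituting into Lemma~\ref{lem:312-213} gives
\[c_n(312:213)\ \geq\ L(n)\ :=\ \left\lfloor\frac{2n+1}{3}\right\rfloor + \sum_{i=1}^{n-1}\left\lfloor\frac{2i+1}{3}\right\rfloor\cdot 2^{n-1-i}.\]
It then remains to prove $L(n) = \lfloor (10/7)\cdot 2^{n-1}\rfloor$ for $n\geq 5$. My approach is a short induction. Telescoping the defining sum yields the recurrence
\[L(n) = 2L(n-1) + \left\lfloor\tfrac{2n+1}{3}\right\rfloor - \left\lfloor\tfrac{2n-1}{3}\right\rfloor,\]
and setting $U(n):=\lfloor(10/7)\cdot 2^{n-1}\rfloor$, a case analysis on $n\bmod 3$ verifies the identical recurrence for $U$. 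The key fact is that $2^{n-1}\bmod 7$ cycles through $1,2,4$ with period $3$, so that $10\cdot 2^{n-1}\bmod 7$ cycles through $3,6,5$; in each of the three cases one checks that $U(n)-2U(n-1)$ equals $\lfloor(2n+1)/3\rfloor-\lfloor(2n-1)/3\rfloor$, which takes the values $1,0,1$ according as $n\equiv 1,2,0\pmod 3$. Matching the base case $L(5)=U(5)=22$ then completes the induction.

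The main potential obstacle is the residue bookkeeping for the $U$-recurrence: one must correctly compute the adjustment $\lfloor(10\cdot 2^{n-1}-r_n)/7\rfloor$ for each residue $r_n\in\{3,5,6\}$, and confirm the increments match on all three branches. This is purely arithmetic and mechanical. A conceptually cleaner alternative is to observe that $\sum_{i\geq 1}\lfloor(2i+1)/3\rfloor/2^i = 10/7$, obtained by splitting the sum along the three residue classes of $i\bmod 3$ and evaluating three geometric series; this shows that $L(n)$ is essentially $(10/7)\cdot 2^{n-1}$ up to a bounded error from the tail, which one then pins down exactly to recover the same closed form.
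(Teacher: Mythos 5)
Your proposal is correct and takes essentially the same approach as the paper: both combine Lemma~\ref{lem:312-213} with the lower bound $a_i(312:213)\geq\lfloor(2i+1)/3\rfloor=\lceil(2i-1)/3\rceil$ extracted from the proof of Theorem~\ref{thm:uni-213}. The only difference is in the closing arithmetic --- the paper evaluates the sum directly via a chain of floor/ceiling identities, while you verify the closed form $\lfloor\tfrac{10}{7}\cdot 2^{n-1}\rfloor$ by induction using the recurrence $L(n)=2L(n-1)+\lfloor\tfrac{2n+1}{3}\rfloor-\lfloor\tfrac{2n-1}{3}\rfloor$; your residue bookkeeping (increments $1,0,1$ for $n\equiv 1,2,0\pmod 3$ on both sides) and the base case $L(5)=U(5)=22$ check out.
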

\begin{proof}
    We know from the proof of Theorem~\ref{thm:uni-213} that there are $\lfloor\frac{2n+1}{3}\rfloor = \lceil\frac{2n-1}{3}\rceil$ permutations avoiding the chain $(312,213:213)$ and ending in 1. Since these permutations must also avoid the chain $(312:213)$, we know $a_n(312:213)\geq \lceil\frac{2n-1}{3}\rceil$ for all $n$. Thus, by Lemma~\ref{lem:312-213},
    \begin{align*}c_n(312:213)  &= a_n(312:213)+\sum_{i=1}^{n-1} 2^{n-1-i}a_i(312:213) \\
    & \geq \bigg\lceil\frac{2n-1}{3}\bigg\rceil+\sum_{i=1}^{n-1} 2^{n-1-i}\cdot\bigg\lceil\frac{2i-1}{3}\bigg\rceil \\ 
    &=\bigg\lceil\frac{2n-1}{3}\bigg\rceil+ \sum_{\substack{i\in[n-1]\\ i\neq2 \bmod 3}} \sum_{j=0}^{n-i-1} 2^{j}\\
    &=\bigg\lceil\frac{2n-1}{3}\bigg\rceil+ \sum_{i=1}^{n-1} (2^{n-i}-1)- \sum_{j=1}^{\lfloor\frac{n}{3}\rfloor} (2^{n-3j+1}-1)\\
    &=\bigg\lceil\frac{2n-1}{3}\bigg\rceil+ (2^n-n-1)- \bigg(\bigg\lfloor\frac{2^{n+1}}{7}\bigg\rfloor - \bigg\lfloor\frac{n+1}{3}\bigg\rfloor\bigg)\\
    &= \bigg\lfloor\frac{10}{7}\cdot 2^{n-1}\bigg\rfloor,\end{align*}
    and so the theorem is proven.
\end{proof}

\begin{remark}\label{remark:rci}
    In \cite{BS19}, the authors prove that a permutation $\pi$ strongly avoids $\sigma$ if and only if $\pi^{-1}$ avoids $\sigma^{-1}$ and if and only if the reverse-complement $rc(\pi)$ strongly avoids $rc(\sigma)$. Their argument extends to chain avoidance, and so since $312=rc(312)^{-1}$ and $213=rc(132)^{-1}$, it must follow that $c_n(312:213) = c_n(312:132)$. Therefore, we can improve Theorem~\ref{thm:312-132} using Theorem~\ref{thm:312-213} and thus state that $c_n(312:132)\geq \lfloor\frac{10}{7}\cdot 2^{n-1}\rfloor$ as well.
\end{remark}

Next, we consider the case where $\tau=231.$

\begin{lemma}\label{lem:312-231}
Suppose $n\geq 3$. Then,
\[
c_n(312:231) =\sum_{i=1}^{n} a_i(312:231)c_{n-i}(312:231).
\]
\end{lemma}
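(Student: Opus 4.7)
The plan is to establish the canonical direct-sum decomposition of a 312-avoider based on the position of $1$, transfer this decomposition to $\pi^2$ via Lemma~\ref{lem:directsumsquare}, and then show that no $231$ pattern of $\pi^2$ can straddle the two summands. The convolution then falls out by summing over the possible sizes of the first block.

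First, I would verify the standard fact that every $\pi\in\Av_n(312)$ with $\pi_i=1$ decomposes as $\pi=\alpha\oplus\beta$ with $|\alpha|=i$ and $\alpha_i=1$. If instead some value $m>i$ appeared at a position $j<i$, then by pigeonhole some value $v\in\{2,\ldots,i\}$ would appear at a position $\ell>i$, and $\pi_j\pi_i\pi_\ell = m\cdot 1\cdot v$ would form a $312$ pattern in $\pi$. Hence $\alpha$ is a $312$-avoider of size $i$ ending in $1$ and $\beta$ is a (possibly empty) $312$-avoider of size $n-i$. By Lemma~\ref{lem:directsumsquare}, $\pi^2=\alpha^2\oplus\beta^2$, so $\alpha^2$ occupies positions $1,\ldots,i$ with values in $\{1,\ldots,i\}$ while $\beta^2$ occupies positions $i+1,\ldots,n$ with values in $\{i+1,\ldots,n\}$.

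The key step is to show that $\pi^2$ avoids $231$ if and only if both $\alpha^2$ and $\beta^2$ avoid $231$. The ``if'' direction is immediate from the block structure. For the converse, suppose $\pi^2$ has a $231$ pattern at positions $p<q<r$ with $\pi^2_p=b$, $\pi^2_q=c$, $\pi^2_r=a$ and $a<b<c$. If $p$ lies in the $\beta^2$ block, then so do $q$ and $r$, and the pattern lies entirely in $\beta^2$. Otherwise $p$ lies in the $\alpha^2$ block, so $b\leq i$. If $r$ also lies in $\alpha^2$ then so does $q$ and the pattern lies entirely in $\alpha^2$. The only remaining possibility is that $r$ lies in $\beta^2$, which forces $a\geq i+1$; but then $a\geq i+1>i\geq b$, contradicting $a<b$. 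Thus no $231$ pattern can straddle the two blocks.

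Combining these two facts, $\pi\in\C_n(312:231)$ with $\pi_i=1$ is in bijection with pairs $(\alpha,\beta)$ where $\alpha\in\C_i(312:231)$ ends in $1$ and $\beta\in\C_{n-i}(312:231)$. Adopting the convention $c_0(312:231)=1$ (so that the $i=n$ term, in which $\beta$ is empty, contributes $a_n(312:231)\cdot 1$) and summing over $i\in\{1,\ldots,n\}$ yields the stated convolution. The only step that requires any real thought is ruling out straddling $231$ patterns, and this reduces to the observation that the ``$1$'' of the pattern must occupy the latest of the three positions while taking the smallest value, which is incompatible with the disjoint value ranges of $\alpha^2$ and $\beta^2$.
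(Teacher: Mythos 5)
Your proof is correct and follows essentially the same route as the paper: decompose $\pi=\alpha\oplus\beta$ by the position of $1$, apply Lemma~\ref{lem:directsumsquare} to get $\pi^2=\alpha^2\oplus\beta^2$, and observe that a $231$ pattern cannot straddle the two blocks because its smallest entry occupies the last position, which the disjoint value ranges forbid. You simply spell out the straddling argument in more detail than the paper does.
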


\begin{proof}
    Suppose we have $\pi= \alpha\oplus\beta\in\Av_n(312)$. Let us first notice that the only way for a 231 pattern to appear in $\pi^2=\alpha^2\oplus\beta^2$ is for there to either be a 231 pattern completely in $\alpha^2$ or $\beta^2$. Therefore, for $\pi^2$ to avoid 213, we need $\beta^2$ to avoid 231 and for $\alpha^2$ to avoid 213 and end in 1.
The statement in the lemma follows. 
\end{proof}

\begin{theorem}\label{thm:312-231}
Suppose $n\geq 1$ and that $b_n$ is the $n$-th coefficient of the generating function \[b(x) = \dfrac{1 - x - x^2 + x^3}{1 - 2 x - x^2 +2 x^3 - x^4}.\] Then $c_n(312:231)\geq c_n$. 
\end{theorem}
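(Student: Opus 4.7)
The plan is to prove $c_n(312:231)\geq b_n$ by induction on $n$, comparing term-by-term the recurrence of Lemma~\ref{lem:312-231} with an analogous recurrence for $b_n=c_n(312:312)$. The companion recurrence
\[
b_n=\sum_{i=1}^{n}a_i(312:312)\,b_{n-i}\qquad(b_0=1)
\]
is obtained by repeating the proof of Lemma~\ref{lem:312-231} with $231$ replaced by $312$ throughout; this is exactly the decomposition observation that B\'ona and Smith invoked, namely that a $312$ pattern in $\alpha^2\oplus\beta^2$ must lie entirely within $\alpha^2$ or within $\beta^2$.

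The key ingredient is the inequality $a_n(312:231)\geq a_n(312:312)$. By the B\'ona--Smith characterization recalled just before Theorem~\ref{thm:bona-smith}, the permutations counted by $a_n(312:312)$ coincide with the unimodal permutations ending in $1$ whose square avoids $312$, and by the proof of Theorem~\ref{thm:uni-312} these are exactly the permutations
\[
\pi=m(m+1)(m+2)\cdots n(m-1)(m-2)\cdots 21,\qquad m>\lceil n/2\rceil.
\]
A direct computation of $\pi(\pi(i))$ on the three monotone segments of $\pi$ yields
\[
\pi^2=\delta_{n-m+1}\oplus\iota_{2m-n-2}\oplus\delta_{n-m+1},
\]
where the middle summand is empty when $2m=n+2$. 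Any $231$ pattern in a direct sum of monotone permutations must lie inside a single summand, and a monotone sequence trivially contains no $231$; hence $\pi^2$ avoids $231$, so each such $\pi$ also belongs to $\C_n(312:231)$ and ends in $1$, proving the inequality.

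The induction is now immediate. The base case $c_0(312:231)=b_0=1$ is trivial; assuming $c_k(312:231)\geq b_k$ for all $k<n$, the two recurrences together with $a_i(312:231)\geq a_i(312:312)$ give
\[
c_n(312:231)=\sum_{i=1}^{n}a_i(312:231)\,c_{n-i}(312:231)\geq\sum_{i=1}^{n}a_i(312:312)\,b_{n-i}=b_n.
\]

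The main obstacle is the concrete determination of $\pi^2$: one must track how values in $[m,n]$ are sent into $[1,m-1]$ under $\pi$ and vice versa, with a small case split for whether the middle increasing block is empty (i.e., whether $2m=n+2$). Once this three-block form is established, $231$-avoidance follows immediately from monotonicity of each summand, and the remainder of the argument is purely the bookkeeping carried out above.
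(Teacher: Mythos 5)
Your proof is correct, and it reaches the bound by a genuinely different (and arguably cleaner) route than the paper's. Both arguments rest on Lemma~\ref{lem:312-231} and, ultimately, on the same family of seed permutations $m(m+1)\cdots n(m-1)\cdots 21$ ending in $1$; your computation $\pi^2=\delta_{n-m+1}\oplus\iota_{2m-n-2}\oplus\delta_{n-m+1}$ agrees with the explicit expression for $\pi^2$ given in the proof of Theorem~\ref{thm:uni-312}, and your observation that a $231$ pattern cannot straddle the summands of a direct sum of monotone blocks is sound. The difference is in how the sequence $b_n$ enters. The paper lower-bounds $a_i(312:231)$ by $\lfloor i/2\rfloor$ using the unimodal permutations whose squares avoid $231$ (from Theorem~\ref{thm:uni-231}), defines $b_n$ by the recurrence $b_n=b_{n-1}+\sum_{i\geq 2}\lfloor i/2\rfloor b_{n-i}$, and then verifies by generating-function algebra that this recurrence produces the stated $b(x)$. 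You instead identify $b_n$ with $c_n(312:312)$ via Theorem~\ref{thm:bona-smith}, note that $c_n(312:312)$ satisfies the same shape of recurrence with coefficients $a_i(312:312)$ (the B\'ona--Smith decomposition, which transfers verbatim since a $312$ pattern also cannot straddle the summands of $\alpha^2\oplus\beta^2$), and prove the termwise domination $a_i(312:231)\geq a_i(312:312)$. This buys two things: the generating-function computation disappears entirely, and the paper's subsequent corollary $c_n(312:231)\geq c_n(312:312)$ becomes the main content of the argument rather than an afterthought. Two small points to tidy: the explicit family $m>\lceil n/2\rceil$ describes $a_n(312:312)$ only for $n\geq 2$, so the case $i=1$ of the domination (where $a_1(312:312)=a_1(312:231)=1$, witnessed by the one-element permutation) should be noted separately; and since Lemma~\ref{lem:312-231} is stated for $n\geq 3$, the induction should start by checking $n=1,2$ directly, which is immediate.
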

\begin{proof}
    We know from the proof of Theorem~\ref{thm:uni-231} that there are $\lfloor\frac{n}{2}\rfloor$ permutations avoiding the chain $(312,213:321)$ with $\pi_n=1$ when $n\geq 2$ and 1 when $n=1$.
    Thus, by Lemma~\ref{lem:312-231},
    \begin{align*}c_n(312:231)  &= \sum_{i=1}^{n} a_i(312:231)c_{n-i}(312:231)\\ & \geq c_{n-1}(312:231) + \sum_{i=2}^{n} \bigg\lfloor\frac{i}{2}\bigg\rfloor \cdot c_{n-i}(312:231).\end{align*}
   
    Let  $b_n$ be as the sequence satisfying the recurrence $b_n = b_{n-1} + \sum_{i=2}^{n} \lfloor\frac{i}{2}\rfloor \cdot b_{n-i}$ with the initial condition that $b_0=b_1=1$. It is clear that in this case, $c_n(312:231)\geq b_n.$ It remains to show that the sequence $b_n$ has the generating function as stated in the theorem.
    
    Since we can write \[f(x)=
    x+\sum_{n\geq 2} \bigg\lfloor \frac{n}{2} \bigg\rfloor x^n = \dfrac{x-x^3+x^4}{(1-x)(1-x^2)},
    \] the generating function $b(x)$ for the sequence $b_n$ is  given by $b(x) = \dfrac{1}{1-f(x)}$, and thus $b(x)$ is exactly the generating function given in the theorem statement.
\end{proof}

\begin{corollary}
    For $n\geq 1$, $c_n(312:231)\geq c_n(312:312).$
\end{corollary}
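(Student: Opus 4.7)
The plan is to observe that the corollary is essentially immediate from the two main enumerative results already in hand, with the key insight being that the generating function produced for the lower bound on $c_n(312:231)$ in Theorem~\ref{thm:312-231} is \emph{literally the same rational function} as the one Bóna and Smith derived for $c_n(312:312)$ in Theorem~\ref{thm:bona-smith}. So the body of the proof is a coincidence-of-generating-functions argument rather than a new combinatorial construction.

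In detail, I would first recall Theorem~\ref{thm:bona-smith}, which gives
\[
\sum_{n \geq 0} c_n(312:312)\, x^n \;=\; \frac{1 - x - x^2 + x^3}{1 - 2x - x^2 + 2x^3 - x^4}.
\]
Next, I would invoke Theorem~\ref{thm:312-231}, which produces a sequence $b_n$ satisfying $c_n(312:231) \geq b_n$ for all $n \geq 1$, where $b_n$ is the $n$-th coefficient of the very same generating function
\[
b(x) \;=\; \frac{1 - x - x^2 + x^3}{1 - 2x - x^2 + 2x^3 - x^4}.
\]
Since the generating functions agree coefficient-by-coefficient, we have $b_n = c_n(312:312)$ for every $n$, and substituting into the inequality from Theorem~\ref{thm:312-231} yields $c_n(312:231) \geq c_n(312:312)$.

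There is no real obstacle here; the only thing worth double-checking is that the initial conditions used to anchor the recurrence for $b_n$ in Theorem~\ref{thm:312-231} match the initial conditions for $c_n(312:312)$ as read off from Theorem~\ref{thm:bona-smith}. Once that is verified, the two sequences coincide and the corollary follows. If desired, one could phrase the argument more combinatorially by noting that the Bóna--Smith analysis of $(312:312)$ decomposes a $312$-avoider as $\alpha \oplus \beta$ and counts unimodal $\alpha$'s ending in $1$ with a square avoiding $312$, whereas Lemma~\ref{lem:312-231} uses the same $\alpha \oplus \beta$ decomposition but allows a larger family of $\alpha$'s (unimodal permutations whose square avoids $231$ rather than $312$), which convolves against the full count $c_{n-i}(312:231) \geq c_{n-i}(312:312)$; however, the cleanest exposition is simply to match generating functions.
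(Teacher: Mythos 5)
Your proof is correct and takes essentially the same route as the paper: the corollary is deduced by observing that the generating function $b(x)$ from Theorem~\ref{thm:312-231} is identical to the generating function for $c_n(312:312)$ in Theorem~\ref{thm:bona-smith}, so $b_n = c_n(312:312)$ and the inequality $c_n(312:231)\geq b_n$ gives the result.
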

\begin{proof}
    This follows immediately from Theorem~\ref{thm:312-231} since $b(x)$ is exactly the generating function for the number of permutations that strongly avoid 312, or equivalently avoid the chain $(312:312)$, as given in Theorem~\ref{thm:bona-smith} (\cite{BS19}).
\end{proof}

\begin{remark}The growth rate of $b_n$ in the statement of Theorem~\ref{thm:312-231} is approximately 2.13224, which implies the growth rate of $c_n(312:231)$ is at least this much.
\end{remark}

Finally, we consider the case where $\tau=321.$

\begin{lemma}\label{lem:312-321}
Suppose $n\geq 3$. Then,
\[
c_n(312:321) =\sum_{i=1}^{n} a_i(312:321)c_{n-i}(312:321).
\]
\end{lemma}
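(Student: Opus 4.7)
The plan is to follow the same template as the proof of Lemma~\ref{lem:312-231} with 321 in place of 231. I would begin by using the standard fact that every $\pi\in\Av_n(312)$ decomposes uniquely as $\pi=\alpha\oplus\beta$ where $\alpha$ is its first sum-indecomposable summand. A short argument (using that any 312-avoiding sum-indecomposable permutation must place $1$ at its final position, since otherwise an element to the left of $1$ together with a smaller element to the right of $1$ would form a 312 pattern) shows that $\alpha$ has length $i$ with $\alpha_i=1$ for a unique $i\in\{1,\ldots,n\}$. This sets up a bijective correspondence between permutations in $\Av_n(312)$ and pairs $(\alpha,\beta)$ with $\alpha$ ending in $1$. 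Lemma~\ref{lem:directsumsquare} then gives $\pi^2=\alpha^2\oplus\beta^2$.

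The key step is to verify that any 321 pattern in $\pi^2$ lies entirely within the $\alpha^2$ block or entirely within the $\beta^2$ block. In $\pi^2=\alpha^2\oplus\beta^2$, the entries of $\alpha^2$ occupy positions $1,\ldots,i$ with values in $\{1,\ldots,i\}$, while the entries of $\beta^2$ occupy positions $i+1,\ldots,n$ with values in $\{i+1,\ldots,n\}$. For three positions $p_1<p_2<p_3$ to form a 321 pattern, the value at the earliest position $p_1$ must strictly exceed the values at $p_2$ and $p_3$. If $p_1$ lies in the $\alpha^2$-block while either $p_2$ or $p_3$ lies in the $\beta^2$-block, then the value at $p_1$ is at most $i$ while the later value is at least $i+1$, a contradiction. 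Hence no 321 pattern can straddle the two summands.

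Consequently, $\pi^2$ avoids 321 if and only if both $\alpha^2$ and $\beta^2$ avoid 321, so a permutation in $\C_n(312:321)$ corresponds exactly to a pair $(\alpha,\beta)$ with $\alpha\in\C_i(312:321)$ ending in $1$ (contributing $a_i(312:321)$ options) and $\beta\in\C_{n-i}(312:321)$ (contributing $c_{n-i}(312:321)$ options). Summing over $i=1,\ldots,n$ yields the stated recurrence. I expect the main obstacle to be the case analysis excluding mixed 321 patterns across the two summands, though this parallels the corresponding step in Lemma~\ref{lem:312-231} so closely that very little new work is required; the strict inequalities demanded by 321 make the value/position obstruction between $\alpha^2$ and $\beta^2$ just as decisive as for 231.
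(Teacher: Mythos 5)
Your proposal is correct and follows essentially the same route as the paper: decompose $\pi=\alpha\oplus\beta$ with $\alpha$ ending in $1$, apply Lemma~\ref{lem:directsumsquare} to get $\pi^2=\alpha^2\oplus\beta^2$, and observe that a 321 pattern cannot straddle the two summands because the leading (largest) entry of such a pattern would have to lie in the low-valued, early-positioned block $\alpha^2$. The paper's proof is just a terser version of this; your position/value argument ruling out mixed patterns is the detail it leaves implicit.
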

\begin{proof}
    Suppose we have $\pi= \alpha\oplus\beta\in\Av_n(312)$. Let us first notice that the only way for a 321 pattern to appear in $\pi^2=\alpha^2\oplus\beta^2$ is for there to be a 321 pattern either completely in $\alpha^2$ or $\beta^2$. Therefore, for $\pi^2$ to avoid 321, we need $\beta^2$ to avoid 321 and for $\alpha^2$ to avoid 321 and end in 1.
Therefore, the statement in the lemma follows. 
\end{proof}

\begin{theorem}\label{thm:312-321}
Suppose $n\geq 1$ and  $d_n$ is the $n$-th coefficient of the generating function \[d(x) = \dfrac{1-x-x^2+x^3}{1-2x-x^2+x^3}.\] Then $c_n(312:321)\geq d_n.$
\end{theorem}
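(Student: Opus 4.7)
The plan is to mirror the structure used for Theorems~\ref{thm:312-231} and~\ref{thm:312-213}. By Lemma~\ref{lem:312-321} it suffices to produce a concrete lower bound on $a_n(312:321)$, which I will extract from the structural description in the proof of Theorem~\ref{thm:uni-321}. The permutations in $\C_n(213,312:321)$ that are not of the form $1\oplus\pi'$ are exactly $23\cdots n1$, the decreasing permutation $n(n-1)\cdots 21$, and the family $k\,n(n-1)\cdots(k+1)(k-1)\cdots 21$ with $\lfloor n/2\rfloor<k<n$. Every one of these ends in $1$, and a short count (accounting for the single coincidence at $n=3$, where $23\cdots n1$ collides with the $k=n-1$ family member) shows the total is at least $\lceil n/2\rceil$ for all $n\geq 1$. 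Since $\C_n(213,312:321)\subseteq\C_n(312:321)$, this yields $a_n(312:321)\geq\lceil n/2\rceil$.

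Next, define an auxiliary sequence by $d_0^{*}=1$ and $d_n^{*}=\sum_{i=1}^{n}\lceil i/2\rceil\, d_{n-i}^{*}$ for $n\geq 1$. A one-line induction on $n$ combined with Lemma~\ref{lem:312-321} and the above bound gives $c_n(312:321)\geq d_n^{*}$, so it only remains to identify $d_n^{*}$ with the coefficient $d_n$ of the stated rational function.

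For this identification, set $g(x)=\sum_{n\geq 1}\lceil n/2\rceil\, x^n$. A partial-fraction decomposition (or splitting the sum by parity) yields
\[g(x)=\frac{x}{(1-x)^2(1+x)}=\frac{x}{1-x-x^2+x^3}.\]
The convolution recurrence for $d_n^{*}$ translates into $D^{*}(x)=1+g(x)\,D^{*}(x)$, so
\[D^{*}(x)=\frac{1}{1-g(x)}=\frac{1-x-x^2+x^3}{1-2x-x^2+x^3}=d(x),\]
which completes the proof.

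I do not anticipate a substantive obstacle: the only bookkeeping is verifying the inequality $\lfloor(n-1)/2\rfloor+2\geq\lceil n/2\rceil$ for $n\geq 4$ (together with trivial checks at $n\in\{1,2,3\}$) and the routine partial-fraction identity for $g(x)$.
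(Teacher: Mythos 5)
Your proposal is correct and follows essentially the same route as the paper: it invokes Lemma~\ref{lem:312-321}, extracts the bound $a_n(312:321)\geq\lceil n/2\rceil$ from the classification in the proof of Theorem~\ref{thm:uni-321}, and compares the resulting convolution recurrence with the coefficients of $d(x)$. The only difference is that you write out the generating-function manipulation $D^{*}(x)=1/(1-g(x))$ explicitly, which the paper leaves as ``a calculation similar to the one in the proof of Theorem~\ref{thm:312-231}.''
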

\begin{proof}
    We know from the proof of Theorem~\ref{thm:uni-321} that there are $\lceil\frac{n}{2}\rceil+1$ permutations that avoid the chain $(312,213:321)$ and end in 1 when $n\geq 4$. Furthermore, $a_n(312:321)\geq \lceil\frac{n}{2}\rceil$ for all $n\geq1.$
    Thus, by Lemma~\ref{lem:312-321},
    \begin{align*}c_n(312:321)  & =\sum_{i=1}^{n} a_{i}(312:321)c_{n-i}(312:321) \\
    &\geq\sum_{i=1}^{n} \bigg\lceil\frac{i}{2}\bigg\rceil \cdot c_{n-i}(312:321).\end{align*}
    Through a calculation similar to the one in the proof of Theorem~\ref{thm:312-231}, we can find that the $d_n$ defined in the statement of the theorem is exactly the sequence satisfying $d_n = \sum_{i=1}^{n} \lceil\frac{i}{2}\rceil \cdot d_{n-i}$ with the initial conditions that $d_i=c_i(312:321)$ for $i\in\{0,1\}$. Therefore, we have that $a_n\geq d_n.$
\end{proof}

\begin{remark}
The growth rate of $c_n$ in the statement of Theorem~\ref{thm:312-321} is approximately 2.24598, which implies the growth rate of $c_n(312:321)$ is at least this much.
\end{remark}
\section{Avoiding longer chains of patterns}\label{231,321}

In this section we consider permutations $\pi$ that avoid 231 and 321 and investigate avoidance properties of their higher powers. In particular, we enumerate permutations avoiding the chain $(231, 321:\varnothing^{k-2}:\tau)$ for $k\geq 2$, where $\varnothing^{k-2}$ indicates a sequence of $k-2$ copies of $\varnothing$. In other words, these are the permutations $\pi$ avoiding 231 and 321 so that $\pi^k$ avoids a pattern $\tau$.  A summary of some of these results can be found in Table \ref{table:231321}.

 \begin{table}[htbp]
            \begin{center}
               \begin{tabular}{|c|c|c||c|c|}
               \hline
               $\tau_1$ & $\tau_2$ & $\tau_3$ & $\c_n(231,321:\tau_1:\tau_2:\tau_3)$ & OEIS \\
               \hline\hline
                132 & $\varnothing$& $\varnothing$ & $F_{n+2}-1 = \displaystyle\sum_{m=0}^{n-1}\sum_{i=0}^{\lfloor\frac{m}{2}\rfloor} \binom{m-i}{i}$  & A000071\\
               \hline  
                $\varnothing$&132 &  $\varnothing$ &$\displaystyle\sum_{m=0}^{n-1}\sum_{i=0}^{\lfloor\frac{m}{3}\rfloor} \binom{m-2i}{i}$ & A077868 \\
               \hline  
               $\varnothing$& $\varnothing$ &  132 &$\displaystyle\sum_{m=0}^{n-1}\sum_{i=0}^{\lfloor\frac{m}{4}\rfloor}\sum_{j=0}^{\lfloor\frac{m-4i}{2}\rfloor} \binom{m-3i-j}{i}\binom{m-4i-j}{j}$ & A368299 \\
               \hline  \hline
                231 & $\varnothing$& $\varnothing$ &  $F_{n+1}$ & A000045 \\
               \hline  
                $\varnothing$&231 &  $\varnothing$ & $T_{n+2}$ & A000073\\
               \hline  
               $\varnothing$& $\varnothing$ &  231 &$Q_{n+3}$ & A000078  \\
               \hline  \hline
               312 & $\varnothing$& $\varnothing$ &  $T_{n+2}$ & A000073\\
               \hline  
                $\varnothing$&312 &  $\varnothing$ & $Q_{n+3}$ &  A000078\\
               \hline  
               $\varnothing$& $\varnothing$ &  312 & {\footnotesize $\displaystyle\sum_{i=0}^{\lfloor\frac{n}{5}\rfloor}\sum_{j=0}^{\lfloor\frac{m-5i}{4}\rfloor}\sum_{r=0}^{\lfloor\frac{m-5i-4j}{2}\rfloor} \binom{n-4i-3j-r}{i}\binom{m-5i-3j-r}{j}\binom{m-5i-4j-r}{r}$}& A079976\\
               \hline  \hline
               132 & 231 & $\varnothing$ & $2F_n$ & A006355\\
               \hline
                 132 & $\varnothing$ & 231 & $2F_{n+1} - F_n$ & A000032\\ \hline
                 $\varnothing$ & 132 & 231 & $\displaystyle\sum_{k=0}^{\lceil\frac{2n}{3}\rceil} \binom{n-k}{\lfloor\frac{k}{2}\rfloor}$ & A097333 \\ \hline
            \end{tabular}
            \end{center}
            \caption{Number of permutations on $n$ elements satisfying various chains of length 2, 3, or 4 of the form $(231,321:\tau_1:\tau_2:\tau_3)$. Here, $F_n$ is the $n$-th Fibonacci number, $T_n$ is the $n$-th Tribonacci number, and $Q_n$ is the $n$-th Tetranacci number.}
            \label{table:231321}
      \end{table}

We start this section by noting that for a permutation $\pi$ to avoid 231 and 321 it must be of the form $\pi = \bigoplus_{i=1}^m \epsilon_{d_i}$ where $\epsilon_{d} :=d123\ldots (d-1)$. Then by Lemma \ref{lem:directsumsquare}, the $k$-th power of $\pi$ is
\begin{equation}\label{eq:1}
    \pi^k = \bigoplus_{i=1}^m \epsilon_{d_i}^k
\end{equation}
with the $k$-th power of $\epsilon_d$ equal to  $\epsilon_{d}^k = \iota_a\ominus \iota_b$ where $\iota_a$ and $\iota_b$ are the increasing permutations of length $a$ and $b$, respectively, and where $a=k \bmod d$ and $b=d-a.$



First, we find that the number of permutations that avoid the chain $(231, 321 : \varnothing^{k-2}: 123)$ is eventually zero.

\begin{theorem}
    For  $n\geq 5$,  the number of permutations in $\S_n$ that avoid the chain $(231,321:  123)$ is $\c_n(231,321:  123) = 0$. 
\end{theorem}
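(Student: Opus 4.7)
The plan is to exploit the direct-sum decomposition of $231,321$-avoiding permutations displayed in Equation~\eqref{eq:1} and then argue that if $n\geq 5$, then $\pi^2$ is forced to contain a $123$ pattern. Recall that any $\pi\in\C_n(231,321)$ can be written as $\pi=\bigoplus_{i=1}^m \epsilon_{d_i}$ with $d_1+\cdots+d_m=n$, so by Lemma~\ref{lem:directsumsquare},
\[
\pi^2 = \bigoplus_{i=1}^m \epsilon_{d_i}^2 = \bigoplus_{i=1}^m \bigl(\iota_{a_i}\ominus\iota_{b_i}\bigr),
\]
where $a_i=2\bmod d_i$ and $b_i=d_i-a_i$. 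Our task reduces to determining when such a direct sum avoids $123$.

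First I would bound the number of summands. If $m\geq 3$, then picking one entry from each of $S_1,S_2,S_3$ (where $S_i=\epsilon_{d_i}^2$) yields three positions that are strictly increasing and three values that are strictly increasing (because in a direct sum later summands carry strictly larger values at strictly later positions), producing a $123$. Hence $m\leq 2$. Next I would bound each $d_i$. Since $\epsilon_{d_i}^2=\iota_{a_i}\ominus\iota_{b_i}$ contains $\iota_{a_i}$ as a block of increasing values and $\iota_{b_i}$ as another, an internal $123$ appears whenever $a_i\geq 3$ or $b_i\geq 3$. Thus $d_i=a_i+b_i\leq 4$, and the only possibilities are $d_i\in\{1,2,3,4\}$, with corresponding squares $1,\,12,\,231,\,3412$.

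The final step is a short case analysis. If $m=1$, then $n=d_1\leq 4$. If $m=2$, we must also rule out a $123$ straddling the two summands: an ascent inside $S_1$ combined with any entry of $S_2$ produces a $123$, and symmetrically for $S_2$. A direct inspection shows that $12$, $231$, and $3412$ each contain an ascent, while the one-letter permutation $1$ does not. Therefore both summands must be the singleton, forcing $d_1=d_2=1$ and $n=2$. In every case, $n\leq 4$, so $\c_n(231,321:123)=0$ for $n\geq 5$. The only place any real care is required is the straddling check in the $m=2$ case; once the structural decomposition is written down, everything else is immediate.
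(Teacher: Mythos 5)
Your proof is correct and follows essentially the same route as the paper's: decompose $\pi$ into the blocks $\epsilon_{d_i}$, use the direct-sum lemma to write $\pi^2$ as a direct sum of permutations of the form $\iota_a\ominus\iota_b$, bound the number of summands by $2$, and then observe that a single large block or an ascent in either of two blocks forces a $123$. The only differences are cosmetic: you specialize to $k=2$ and list the four possible squares explicitly, whereas the paper argues for general $k$ by noting each block's power must be decreasing, which forces $d_i\leq 2$ in the two-block case.
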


\begin{proof}
   Since $\pi = \bigoplus_{i=1}^m \epsilon_{d_i}$ and $\pi^k = \bigoplus_{i=1}^m \epsilon_{d_i}^k$, there is necessarily an ascent between each $\epsilon_{d_i}^k$. Thus we must have $m\leq 2$. If $m=1$, then $\pi=\epsilon_n$. Notice for $n \geq 5$ that $\epsilon_n^k$ is a skew sum of two increasing permutations and one of them must be at least length 3, so $\pi^k$ contains a 123 pattern. If $m=2$, then $\pi=\epsilon_{d_1}\oplus\epsilon_{d_2}$ where $d_1+d_2=n$. But since there must be an ascent between $\epsilon_{d_1}^k$ and $\epsilon_{d_2},$ each $\epsilon_{d_i}^k$ must be decreasing if $\pi$ avoids 123. But $\epsilon_d^k$ can only be the decreasing permutation if $d=1$ or $d=2$. Thus the largest permutation avoiding 231 and 321 whose $k$-th power avoids 123 is size 4. 
\end{proof}

\begin{theorem} \label{th:132-k}
 Denote by $c_n^{(k)}=c_n(231, 321 :\varnothing^{k-2}: 132)$, i.e., the number of permutations $\pi$ so that $\pi$ avoids 231 and 321 and $\pi^k$ avoids 132. Then for $n\geq 2$ and $k\geq 2$, $c_n^{(k)}$ is given by the recurrence: 
\[\c_n^{(k)}= 1+\sum_{d\mid k} \c_{n-d}^{(k)},\]
with the initial conditions that $c_1^{(k)}=1$ and $c_{i}^{(k)}=0$ for $i\leq 0$. The generating function $c^{(k)}_{132}(x) = \sum_{n\geq 1} c_n^{(k)} x^n$ is given by 
\[
c^{(k)}_{132}(x) = \frac{x}{(1-x)(1-\sum_{d\mid k}x^d)}.
\]
\end{theorem}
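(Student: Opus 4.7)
The plan is to first use the structural decomposition of $231, 321$-avoiding permutations, then analyze when the $k$-th power contains a $132$ pattern, and finally translate the result to a composition-counting problem. Since any $\pi \in \C_n(231, 321)$ has the form $\pi = \bigoplus_{i=1}^m \epsilon_{d_i}$, Equation~\eqref{eq:1} gives $\pi^k = \bigoplus_{i=1}^m \epsilon_{d_i}^k$, where each block is $\iota_{a_i} \ominus \iota_{b_i}$ with $a_i = k \bmod d_i$ and $b_i = d_i - a_i$. So the problem reduces to deciding for which compositions $(d_1,\ldots,d_m)$ of $n$ the direct sum $\bigoplus_i (\iota_{a_i} \ominus \iota_{b_i})$ avoids $132$.

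I would then establish two structural claims. First, each individual block $\iota_a \ominus \iota_b$ avoids $132$: splitting a would-be length-three witness according to which positions fall in the top part (holding the values $b+1, \ldots, b+a$ in increasing order) versus the bottom part (holding $1, \ldots, b$ in increasing order), each of the four distributions violates the value inequality $\pi_i < \pi_\ell < \pi_j$. Second, a direct sum $\bigoplus_i \alpha_i$ contains a $132$ pattern if and only if some $\alpha_i$ contains a $132$, or some $\alpha_i$ with $i \geq 2$ contains a descent. The ``if'' direction is immediate: any element of an earlier $\alpha_s$ paired with a descent inside $\alpha_i$ produces a $132$, since values in $\alpha_i$ strictly exceed those in $\alpha_s$. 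For ``only if'', a case analysis on how the three witnessing positions distribute among the summands---using that values in later blocks strictly exceed values in earlier ones---rules out every configuration except ``one position in an earlier block, two positions forming a descent in a later block''.

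Combining the two claims, $\pi^k$ avoids $132$ exactly when no $\epsilon_{d_i}^k$ with $i \geq 2$ has a descent. Since $\iota_{a_i} \ominus \iota_{b_i}$ has a descent precisely when $a_i \geq 1$, equivalently $d_i \nmid k$, this amounts to $d_i \mid k$ for every $i \geq 2$, while $d_1$ is unconstrained. Hence $c_n^{(k)}$ counts compositions $(d_1, d_2, \ldots, d_m)$ of $n$ in which $d_1$ is any positive integer and $d_i \in \{d : d \mid k\}$ for $i \geq 2$. The recurrence follows by conditioning on whether $m = 1$, yielding the single permutation $\epsilon_n$ and contributing the ``$1$'' term, or $m \geq 2$, in which case removing the last part $d_m$ produces a valid composition of length $n - d_m$ of the same form, contributing $\sum_{d \mid k} c_{n-d}^{(k)}$. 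For the generating function, multiplying the first-part factor $\tfrac{x}{1-x}$ (an arbitrary positive-integer first part) by the tail factor $\tfrac{1}{1 - \sum_{d \mid k} x^d}$ (an arbitrary-length sequence of parts drawn from the divisors of $k$) yields the stated closed form. The main obstacle I anticipate is the ``only if'' case analysis of the second structural claim, which must be handled carefully to ensure that cross-block $132$ patterns arise exactly from a descent in a non-initial block and from no other configuration.
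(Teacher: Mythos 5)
Your proposal is correct and follows essentially the same route as the paper: decompose $\pi$ into $\bigoplus_i \epsilon_{d_i}$, use Lemma~\ref{lem:directsumsquare} to reduce to the blocks $\epsilon_{d_i}^k=\iota_{a_i}\ominus\iota_{b_i}$, observe that a $132$ in $\pi^k$ can only arise from a ``1'' in an earlier block together with a descent in a later block (forcing $d_i\mid k$ for $i\geq 2$ while $d_1$ is free), and count the resulting compositions. Your two structural claims simply make explicit what the paper asserts in one sentence, and your derivations of the recurrence and generating function match the paper's.
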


\begin{proof}
Assume $\pi = \bigoplus_{i=1}^m \epsilon_{d_i}$ avoids 231 and 321. Then $\pi^k= \bigoplus_{i=1}^m \epsilon_{d_i}^k$ as in (\ref{eq:1}). Since $\epsilon_{d}^k$ for must be of the form $\iota_a\ominus\iota_b$ for some $a+b=d$, the only way a 132 pattern in $\pi^k$ can appear is if the ``1'' is in some $\epsilon_{d_i}$ and the ``3'' and ``2'' is in another $\epsilon_{d_j}$ for some $i<j$.

Therefore, in order for $\pi^k$ to avoid 132, we must have that $\epsilon_{d_i}^k$ is increasing for all $2\leq i\leq m$. Thus $(d_1, d_2, \ldots, d_m)$ is a composition of $n$ with the property that $d_i\mid k$ for all $i\geq 2$ since $\epsilon_d^k$ is the identity permutation exactly when $d\mid k$. It is clear that compositions of this form satisfy this recurrence in the statement of the theorem since to obtain all such compositions of $n$, we can either add a new element equal to a divisor of $k$ to a previous composition or take the composition of $n$ of length 1. 
\end{proof}

To see an example of the recurrence for compositions of $n$ into divisors of $k$ (with the exception of the first element of the composition that can be any number), consider the case where $k=15$ and $n=68$. In this case, we could take any valid composition $(d_1, \ldots, d_m)$ of 53 and add ``15'' to get a composition $(d_1, \ldots, d_m, 15)$  of 68. Or we could take any valid composition $(d_1, \ldots, d_m)$ of 63 and add ``5'' to get a composition $(d_1, \ldots, d_m, 5)$ of 68. Likewise we could take any valid composition $(d_1, \ldots, d_m)$ of 65 and add ``3'' to get a composition $(d_1, \ldots, d_m, 3)$ of 68. We could also take any valid composition $(d_1, \ldots, d_m)$ of 67 and add ``1'' to get a composition $(d_1, \ldots, d_m, 1)$ of 68. Finally, we could take the length 1 composition $(68)$ of 68. Therefore the recurrence would be $c_{68} = 1+c_{67}+c_{65}+c_{63}+c_{53}$.

\begin{corollary} \label{th:132}
For $n\geq 3$, the number of permutations in $\S_n$ which avoid the chain $(231, 321 : 132)$ is $\c_n(231,321:  132) = F_{n+2}-1$
where $F_n$ is the $n$-th Fibonacci number.
\end{corollary}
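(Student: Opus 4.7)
The plan is to derive this as a direct specialization of Theorem~\ref{th:132-k} at $k=2$, so the bulk of the work has already been done and what remains is to translate the resulting linear recurrence into the closed form $F_{n+2}-1$. First I would observe that the divisors of $2$ are exactly $1$ and $2$, so the general recurrence $\c_n^{(k)} = 1 + \sum_{d\mid k}\c_{n-d}^{(k)}$ collapses, with $k=2$, to
\[
\c_n^{(2)} = 1 + \c_{n-1}^{(2)} + \c_{n-2}^{(2)},
\]
valid for $n\geq 2$ with $\c_1^{(2)} = 1$ and $\c_0^{(2)} = 0$ (and $\c_i^{(2)}=0$ for $i<0$) as stipulated in Theorem~\ref{th:132-k}.

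Next, I would introduce the auxiliary sequence $a_n := \c_n^{(2)} + 1$ to absorb the constant term. Substituting gives $a_n - 1 = 1 + (a_{n-1}-1) + (a_{n-2}-1)$, which simplifies to the Fibonacci recurrence $a_n = a_{n-1} + a_{n-2}$. The initial values are $a_0 = 1$ and $a_1 = 2$, matching $F_2 = 1$ and $F_3 = 2$, so a trivial induction gives $a_n = F_{n+2}$ for all $n\geq 0$. Undoing the substitution yields $\c_n^{(2)} = F_{n+2} - 1$, which is the claim. A direct check against the small values computed from the recurrence ($1, 2, 4, 7, 12, \ldots$ against $F_3-1, F_4-1, F_5-1, \ldots$) confirms the formula.

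There is no serious obstacle here since the work lies entirely in Theorem~\ref{th:132-k}; the only thing to be careful about is bookkeeping of the initial conditions to make sure the index shift in $F_{n+2}$ is correct. As a sanity check, one could alternatively plug $k=2$ into the generating function $c^{(k)}_{132}(x) = x/\bigl((1-x)(1-\sum_{d\mid k}x^d)\bigr)$, obtaining $x/\bigl((1-x)(1-x-x^2)\bigr)$, and expand this via partial fractions or verify that it equals $\sum_{n\geq 1}(F_{n+2}-1)x^n$ by comparing with the standard Fibonacci generating function $x/(1-x-x^2)$ minus $x/(1-x)$. Either route establishes the closed form, and the first is shorter.
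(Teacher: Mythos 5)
Your derivation is correct: specializing Theorem~\ref{th:132-k} to $k=2$ gives $\c_n^{(2)} = 1 + \c_{n-1}^{(2)} + \c_{n-2}^{(2)}$ with $\c_0^{(2)}=0$, $\c_1^{(2)}=1$, and the shift $a_n = \c_n^{(2)}+1$ turns this into the Fibonacci recurrence with $a_0=1=F_2$, $a_1=2=F_3$, so $\c_n^{(2)}=F_{n+2}-1$; the generating-function cross-check also works out. The paper reaches the same conclusion from the same theorem but by a different final step: rather than solving the recurrence, it uses the characterization from the proof of Theorem~\ref{th:132-k} (compositions $(d_1,\ldots,d_m)$ of $n$ with $d_i\in\{1,2\}$ for $i\geq 2$ and $d_1$ arbitrary) and exhibits an explicit bijection with compositions of $n+1$ into parts $1$ and $2$ containing at least one $2$, which are counted by $F_{n+2}-1$. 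Your route is shorter and more mechanical, requiring only careful bookkeeping of initial conditions; the paper's bijection gives a combinatorial explanation for why the Fibonacci numbers appear and why the correction term is exactly $-1$ (it removes the all-ones composition). Both are complete proofs, and nothing is missing from yours.
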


\begin{proof}
As stated in the proof of Theorem~\ref{th:132-k}, in order for $\pi^2$ to avoid 132, we must have $(d_1, \ldots, d_k)$ satisfy that $d_i=1$ or $2$ for all $i\geq 2$. The number of such compositions of $n$ can be shown to be equal to $F_{n+2}-1$. Indeed, it is well-known that $F_{n+2}$ counts the compositions of $n+1$ into parts of size 1 and 2, and so $F_{n+2}-1$ is the number of compositions of $n+1$ into parts of size 1 and 2 that contains at least one occurrence of 2. Let $(c_1,c_2,\ldots,c_{n+2})$ be such a composition with $c_t$ equal to the first  occurrence of a 2. Then by taking $d_1 = (\sum_{i=1}^{t-1} c_i)+1$, and $d_j=c_{t+j-1}$ for all $2\leq j\leq n+1-t$, the composition $(d_1,d_2,\ldots, d_n)$ is a compositions with the property that $d_i=1$ or $2$ for all $i\geq 2$. Since this can be reversed by splitting $d_1$ into $(d_1-1)$ 1's followed by a 2, this is a bijection. \end{proof}

In Table~\ref{table:231321}, we include the case where $k=2, 3,$ or 4. In each case, we find a closed form by enumerating the number of compositions described in the proof of Theorem~\ref{th:132-k}. 

\begin{remark}
Using the ideas described in Remark~\ref{remark:rci}, we can note that since $231=(rc(231))^{-1}$, $321=(rc(321))^{-1}$, and $213 =(rc(132))^{-1}$, we have that $c_n(231, 321: \varnothing^{k-2}:213) = c_n(231,321:\varnothing^{k-2}:132)$ for all $k\geq 2$ and $n\geq 1$. 
\end{remark}

Next, let us consider the case where $\tau=231$.

\begin{theorem} \label{th:231-k}
 Denote by $c_n^{(k)}=c_n(231, 321 :\varnothing^{k-2}: 231)$, i.e., the number of permutations $\pi$ so that $\pi$ avoids 231 and 321 and $\pi^k$ avoids 231. Also let $D(k)$ be the set $D(k)= \{d: d\mid k\} \cup \{d' : d'\mid k-1\},$ i.e., the set of divisors of $k$ and $k-1$. Then for $n\geq 2$ and $k\geq 2$, $c_n^{(k)}$ is given by the recurrence: 
\[\c_n^{(k)}= \sum_{d\in D(k)} \c_{n-d}^{(k)},\]
with the initial conditions that $c_0^{(k)}=a_1=1$ and $c_{i}^{(k)}=0$ for $i<0$. The generating function $c^{(k)}_{231}(x) = \sum_{n\geq 0} c_n^{(k)} x^n$ is given by 
\[
c^{(k)}_{231}(x) = \frac{1}{(1-\sum_{d\in D(k)}x^d)}.
\]
\end{theorem}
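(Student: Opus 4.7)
The plan is to follow the strategy used in the proof of Theorem~\ref{th:132-k}. Since $\pi$ avoids both 231 and 321, it decomposes as $\pi = \bigoplus_{i=1}^m \epsilon_{d_i}$, and by Lemma~\ref{lem:directsumsquare} we get $\pi^k = \bigoplus_{i=1}^m \epsilon_{d_i}^k$, where each $\epsilon_{d_i}^k = \iota_{a_i} \ominus \iota_{b_i}$ with $a_i = k \bmod d_i$ and $b_i = d_i - a_i$. The enumerative problem thus reduces to determining which compositions $(d_1, \ldots, d_m)$ of $n$ force $\pi^k$ to avoid 231.

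First I would show that a 231 pattern in a direct sum cannot span two distinct summands: if positions $p<q<r$ form a 231 pattern in $\pi^k$, then $\pi^k_q > \pi^k_p > \pi^k_r$, but values in later blocks strictly exceed values in earlier blocks, so $\pi^k_p > \pi^k_r$ forces $p$ and $r$ into the same block, which in turn forces $q$ into the same block. Hence $\pi^k$ avoids 231 if and only if each $\epsilon_{d_i}^k$ avoids 231. Next, writing $\iota_a \ominus \iota_b$ explicitly as $(b+1)(b+2)\cdots(a+b)\,1\,2\cdots b$ and doing a brief case check shows that it contains 231 if and only if $a \geq 2$ and $b \geq 1$: the first two entries together with any entry from the $\iota_b$ part form a 231 pattern. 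Since $a = k \bmod d$ satisfies $0 \leq a \leq d-1$, we have $b \geq 1$ automatically, so $\epsilon_d^k$ avoids 231 precisely when $a \in \{0,1\}$, i.e., when $d \mid k$ or $d \mid (k-1)$, which is exactly the condition $d \in D(k)$.

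Combining these observations, the permutations counted by $c_n^{(k)}$ are in bijection with compositions of $n$ all of whose parts lie in $D(k)$. Conditioning on the last part immediately yields the recurrence $c_n^{(k)} = \sum_{d \in D(k)} c_{n-d}^{(k)}$ with $c_0^{(k)} = 1$ and $c_i^{(k)} = 0$ for $i < 0$, and the standard generating function identity for compositions with a prescribed part set gives $c^{(k)}_{231}(x) = 1/(1 - \sum_{d \in D(k)} x^d)$. The main point requiring attention is the cross-block argument for 231; once that is in place, nothing more is needed. Notably, in contrast to the 132 case where the leading block was unconstrained, here the restriction applies uniformly to every block, which is why the recurrence lacks the additive constant present in Theorem~\ref{th:132-k}.
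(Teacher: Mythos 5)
Your proposal is correct and follows essentially the same route as the paper's proof: decompose $\pi$ as $\bigoplus_i \epsilon_{d_i}$, observe that any 231 pattern in $\pi^k$ must lie entirely within a single block $\epsilon_{d_i}^k = \iota_a \ominus \iota_b$, characterize avoidance by $a \in \{0,1\}$ (i.e., $d \mid k$ or $d \mid (k-1)$), and count the resulting compositions. Your explicit justification that a 231 pattern cannot span two summands, and your remark contrasting the uniform constraint here with the unconstrained first block in the 132 case, are details the paper leaves implicit but do not change the argument.
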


\begin{proof}
Assume $\pi = \bigoplus_{i=1}^m \epsilon_{d_i}$ avoids 231 and 321. Then $\pi^k= \bigoplus_{i=1}^m \epsilon_{d_i}^k$ as in (\ref{eq:1}). Since $\epsilon_{d}^k$ for must be of the form $\iota_a\ominus\iota_b$ for some $a+b=d$, the only way a 231 pattern in $\pi^k$ can appear is the entire 231 pattern is in a single $\epsilon_{d_i}^k$ for some $i$. Notice that $\epsilon_{d}^k=\iota_a\ominus\iota_b$ avoids 231 exactly when $a=0$ or $a=1$. This happens exactly when $k=0\bmod d$ or $k=1\bmod d$. Therefore, $\pi=\bigoplus_{i=1}^m \epsilon_{d_i}$ avoids the chain $(231,321:231)$ exactly when each $d_i$ is either a divisor of $k$ or a divisor of $k-1$. 
Thus $\C_n(231,321:231)$ is in bijection with compositions of $n$ that only use divisors of $k$ or divisors of $k-1$, and the result follows.
\end{proof}

The following corollary deals with the case where $k=2$. The permutations in this case are exactly those permutations that avoid 321 and strongly avoid 231. 
\begin{corollary}
For $n\geq 3$, the number of permutations on $[n]$ which avoid the chain $(231, 321 : 231)$ is $\c_n(231,321:  231) = F_{n+1}$  where $F_n$ is the $n$-th Fibonacci number.
\end{corollary}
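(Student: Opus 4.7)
The plan is to specialize Theorem~\ref{th:231-k} to the case $k=2$ and recognize the resulting recurrence as the Fibonacci recurrence. First, I would compute $D(2) = \{d : d \mid 2\} \cup \{d : d \mid 1\} = \{1,2\} \cup \{1\} = \{1,2\}$. Substituting this set of allowed part sizes into the recurrence of Theorem~\ref{th:231-k} immediately yields
\[
c_n^{(2)} = c_{n-1}^{(2)} + c_{n-2}^{(2)},
\]
which is the Fibonacci recurrence.

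Next, I would verify the initial conditions. By Theorem~\ref{th:231-k}, $c_0^{(2)} = 1$ and $c_1^{(2)} = 1$. Since $F_2 = 1$ and $F_3 = 2$, the identification $c_n^{(2)} = F_{n+1}$ holds for $n = 0$ (matching $F_1 = 1$) and $n = 1$ (matching $F_2 = 1$). A routine induction, using the Fibonacci recurrence $F_{n+1} = F_n + F_{n-1}$, then propagates the identity for all $n \geq 2$.

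As a sanity check, one can alternatively verify the claim at the level of generating functions: the formula from Theorem~\ref{th:231-k} specializes to
\[
c^{(2)}_{231}(x) = \frac{1}{1 - x - x^2},
\]
which is the standard generating function $\sum_{n \geq 0} F_{n+1}\, x^n$ for shifted Fibonacci numbers, confirming the closed form directly.

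The main (and essentially only) obstacle is bookkeeping with the Fibonacci index convention; because Theorem~\ref{th:231-k} is already stated with the decomposition $\pi = \bigoplus \epsilon_{d_i}$ and encodes all the combinatorial content, no new structural argument is needed for this corollary.
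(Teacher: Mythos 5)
Your proposal is correct and follows essentially the same route as the paper: the paper's proof also just specializes Theorem~\ref{th:231-k} to $k=2$, identifies $\C_n(231,321:231)$ with compositions of $n$ into parts $1$ and $2$, and invokes the standard Fibonacci count. Your explicit verification of $D(2)=\{1,2\}$, the initial conditions, and the generating function $1/(1-x-x^2)$ is just a slightly more detailed write-up of the same argument.
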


\begin{proof}
In this case, $\C_n(231,321:231)$ are in bijection with compositions of $n$ into 1s and 2s. Together with the initial conditions, this gives us the $(n+1)$-st Fibonacci number. 
\end{proof}

The case for $\tau=312$ is very similar to the previous case. In this case, we need that the powers of any blocks $\epsilon_d$ avoid 312. 

\begin{theorem} \label{th:312-k}
 Denote by $c_n^{(k)}=c_n(231, 321 :\varnothing^{k-2}: 312)$, i.e., the number of permutations $\pi$ so that $\pi$ avoids 231 and 321 and $\pi^k$ avoids 312. Also let $D'(k)$ be the set $D'(k)= \{d: d\mid k\} \cup \{d' : d'\mid k+1\},$ i.e., the set of divisors of $k$ and $k+1$. Then for $n\geq 2$ and $k\geq 2$, $c_n^{(k)}$ is given by the recurrence: 
\[\c_n^{(k)}= \sum_{d\in D'(k)} \c_{n-d}^{(k)},\]
with the initial conditions that $a_0=a_1=1$ and $a_{i}=0$ for $i<0$. The generating function $c^{(k)}_{312}(x) = \sum_{n\geq 1} c_n^{(k)} x^n$ is given by 
\[
c^{(k)}_{312}(x) = \frac{1}{(1-\sum_{d\in D'(k)}x^d)}.
\]
\end{theorem}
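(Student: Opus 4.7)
The plan is to mirror the proof of Theorem~\ref{th:231-k}, replacing the pattern-avoidance analysis of the building blocks $\epsilon_{d}^k$ with the corresponding one for $312$. Since $\pi$ avoids $231$ and $321$, we again have $\pi = \bigoplus_{i=1}^m \epsilon_{d_i}$, and by Lemma~\ref{lem:directsumsquare} together with (\ref{eq:1}), $\pi^k = \bigoplus_{i=1}^m \epsilon_{d_i}^k$, where each block satisfies $\epsilon_{d_i}^k = \iota_{a_i}\ominus \iota_{b_i}$ with $a_i = k\bmod d_i$ and $b_i = d_i - a_i$.

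Next I would observe that any $312$ pattern in a direct sum $\sigma\oplus\tau$ must lie entirely in one summand: since every entry in the left summand is smaller than every entry in the right, the ``$3$'' (the largest and leftmost entry of the pattern) cannot sit to the left of the ``$1$'' or ``$2$'' while also being larger than them unless all three lie in the same summand. Consequently, $\pi^k$ avoids $312$ if and only if each $\epsilon_{d_i}^k$ avoids $312$.

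The main step is then to characterize when $\iota_a\ominus\iota_b$ avoids $312$. Writing this permutation as $(b+1)(b+2)\cdots(b+a)\,1\,2\cdots b$, I would check that if $a\geq 1$ and $b\geq 2$, then the triple at positions $1, a+1, a+2$ has values $b+1, 1, 2$, which is a $312$ pattern; conversely, if $a=0$ then the permutation is increasing, and if $b\leq 1$ then at most one entry lies below all of the top block, precluding a $312$. Hence $\epsilon_{d}^k$ avoids $312$ precisely when $k\bmod d = 0$ (the $a=0$ case, i.e.\ $d\mid k$) or $k\bmod d = d-1$ (the $b=1$ case, i.e.\ $d\mid k+1$); note $b=0$ is impossible since $a<d$. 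This is exactly the condition $d\in D'(k)$.

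The remaining step is purely combinatorial: permutations in $\C_n(231,321:\varnothing^{k-2}:312)$ are in bijection with compositions $(d_1,\ldots,d_m)$ of $n$ with each $d_i\in D'(k)$, so conditioning on the final part $d_m$ gives the stated recurrence, which yields the generating function
\[
c^{(k)}_{312}(x) = \frac{1}{1-\sum_{d\in D'(k)} x^d}
\]
by the standard transfer-matrix/compositions argument. I expect the only nontrivial step to be the bookkeeping in the characterization of when $\iota_a\ominus\iota_b$ avoids $312$; everything else is a direct transcription of the $231$ argument.
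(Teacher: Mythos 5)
Your proposal is correct and follows essentially the same route as the paper: decompose $\pi$ into blocks $\epsilon_{d_i}$, note that a $312$ pattern in $\pi^k$ must lie within a single block $\epsilon_{d_i}^k=\iota_{a_i}\ominus\iota_{b_i}$, characterize when such a block avoids $312$, and count the resulting compositions. Your block analysis is in fact slightly more careful than the paper's (which states the avoidance condition as ``$b=0$ or $b=1$'' where the correct dichotomy is $a=0$ or $b=1$, as you note), but the conclusion $d\mid k$ or $d\mid k+1$ and everything downstream is identical.
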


\begin{proof}
Assume $\pi = \bigoplus_{i=1}^m \epsilon_{d_i}$ avoids 231 and 321. Then $\pi^k= \bigoplus_{i=1}^m \epsilon_{d_i}^k$ as in (\ref{eq:1}). As before, since $\epsilon_{d}^k$ for must be of the form $\iota_a\ominus\iota_b$ for some $a+b=d$, the only way a 312 pattern in $\pi^k$ can appear is the entire 312 pattern is in a single $\epsilon_{d_i}^k$ for some $i$. Notice that $\epsilon_{d}^k=\iota_a\ominus\iota_b$ avoids 312 exactly when $b=0$ or $b=1$. This happens exactly when $k=0\bmod d$ or $k=d-1\bmod d$. Therefore, $\pi=\bigoplus_{i=1}^m \epsilon_{d_i}$ avoids the chain $(231,321:312)$ exactly when each $d_i$ is either a divisor of $k$ or a divisor of $k+1$. 
Thus $\C_n(231,321:312)$ is in bijection with compositions of $n$ that only use divisors of $k$ or divisors of $k+1$, and the result follows.
\end{proof}

A straightforward corollary of this is that the number of permutations avoiding 213 and 321 whose square avoids 312 is given by the ($n+2$)-nd Tribonacci number, $T_{n+2}$, defined by the recurrence $T_0=T_1=0$, $T_2=1$, and $T_n=T_{n-1}+T_{n-2}+T_{n-3}.$
\begin{corollary}
For $n\geq 3$, the number of permutations on $[n]$ which avoid the chain $(231, 321 : 312)$ is $\c_n(231,321:  312) = T_{n+2}$
where $T_n$ is the $n$-th Tribonacci number.
\end{corollary}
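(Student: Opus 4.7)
The plan is to apply Theorem~\ref{th:312-k} directly with $k=2$ and verify that the resulting recurrence and initial conditions coincide with those of the shifted Tribonacci sequence $T_{n+2}$.

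First I would compute $D'(2)$ explicitly. Since $D'(k) = \{d : d\mid k\}\cup\{d' : d'\mid k+1\}$, taking $k=2$ gives $D'(2) = \{1,2\}\cup\{1,3\} = \{1,2,3\}$. Plugging this into the recurrence from Theorem~\ref{th:312-k} yields
\[
c_n^{(2)} = c_{n-1}^{(2)} + c_{n-2}^{(2)} + c_{n-3}^{(2)}
\]
for $n\ge 2$, with initial conditions $c_0^{(2)}=c_1^{(2)}=1$ and $c_i^{(2)}=0$ for $i<0$. This is precisely the Tribonacci recurrence.

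Next I would check the base cases to determine the correct index shift. Using the definition $T_0=T_1=0$, $T_2=1$, and $T_n=T_{n-1}+T_{n-2}+T_{n-3}$, we get $T_2=1,T_3=1,T_4=2,T_5=4$, so $T_{n+2}$ evaluated at $n=0,1$ gives $1,1$, matching the initial conditions $c_0^{(2)}=c_1^{(2)}=1$. A single induction step using the recurrence then confirms that $c_n^{(2)} = T_{n+2}$ for all $n\ge 0$, and in particular for $n\ge 3$ as stated.

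There is no substantial obstacle here — this is essentially a specialization of the preceding theorem, and the only work is to match the recurrence with the standard Tribonacci indexing. The mild bookkeeping step is making sure $\{1,2\}\cup\{1,3\}$ is treated as a set (so the divisor $1$ is not double-counted in the recurrence), which aligns with the convention implicit in the statement of Theorem~\ref{th:312-k}.
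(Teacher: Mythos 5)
Your proof is correct and follows exactly the route the paper intends: the paper gives no explicit argument for this corollary beyond calling it a ``straightforward corollary'' of Theorem~\ref{th:312-k}, and your specialization to $k=2$, the computation $D'(2)=\{1,2,3\}$, and the matching of initial conditions with the shifted Tribonacci indexing is precisely that argument carried out. Nothing is missing.
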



For the case that $\tau=321$, it is clear that all powers of $\pi\in\S_n$ that avoid 231 and 321 must avoid 321, as described in the proof below.
\begin{theorem}
For $n\geq 3$ and $k\geq 2$, the number of permutations on $[n]$ which avoid the chain $(231, 321 : \varnothing^{k-2}: 321)$ is $\c_n(231,321 : \varnothing^{k-2}: 321) = 2^{n-1}$.
\end{theorem}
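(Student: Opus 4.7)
The plan is to show that the condition ``$\pi^k$ avoids $321$'' is automatically satisfied by every $\pi$ that avoids $231$ and $321$, and then simply to count the permutations avoiding $\{231,321\}$.

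First I would recall, as already observed at the start of this section, that any permutation $\pi\in\S_n$ avoiding $231$ and $321$ has the shape $\pi=\bigoplus_{i=1}^{m}\epsilon_{d_i}$, so by Lemma~\ref{lem:directsumsquare},
\[
\pi^k=\bigoplus_{i=1}^{m}\epsilon_{d_i}^{k},
\]
and each $\epsilon_{d_i}^{k}$ equals $\iota_{a_i}\ominus\iota_{b_i}$ for some $a_i+b_i=d_i$ with $a_i=k\bmod d_i$.

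Next I would analyze a single block $\iota_a\ominus\iota_b$. Writing out its one-line notation shows that every length-$3$ subsequence falls into one of four types: three entries chosen from the top part (increasing, so pattern $123$), three entries from the bottom part (also $123$), two from the top plus one from the bottom (pattern $231$), or one from the top plus two from the bottom (pattern $312$). In particular, no length-$3$ subsequence of $\iota_a\ominus\iota_b$ is decreasing, so every $\iota_a\ominus\iota_b$ avoids $321$. This is the only step that requires a genuine (if short) case check, and it is the main content of the proof.

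Then I would lift this to the direct sum $\pi^k=\bigoplus_i(\iota_{a_i}\ominus\iota_{b_i})$. Because the values in block $i$ are all smaller than the values in block $i+1$, any $321$ pattern in $\pi^k$ must lie entirely within one block; but each block avoids $321$ by the previous step. Hence $\pi^k$ automatically avoids $321$, so the condition $\pi^k\in\Av(321)$ imposes nothing beyond $\pi\in\Av(231,321)$; the intermediate powers $\pi^2,\dots,\pi^{k-1}$ are unrestricted (indicated by the $\varnothing^{k-2}$), so they play no role either.

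Finally I would count. The decomposition $\pi=\bigoplus_{i=1}^m\epsilon_{d_i}$ is a bijection between permutations in $\S_n$ avoiding $231$ and $321$ and compositions $(d_1,\dots,d_m)$ of $n$. Since the number of compositions of $n$ is $2^{n-1}$, we conclude $\c_n(231,321:\varnothing^{k-2}:321)=2^{n-1}$, completing the proof.
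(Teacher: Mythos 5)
Your proposal is correct and follows essentially the same route as the paper's proof: decompose $\pi$ as $\bigoplus_i \epsilon_{d_i}$, observe that each block of $\pi^k$ has the form $\iota_a\ominus\iota_b$ and hence avoids $321$, note that no $321$ can straddle blocks of a direct sum, and count compositions of $n$ to get $2^{n-1}$. Your explicit case check of the length-$3$ subsequences of $\iota_a\ominus\iota_b$ is a slightly more detailed justification of a step the paper states without elaboration, but the argument is the same.
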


\begin{proof}
The number of permutations in $\S_n$ elements avoiding both 231 and 321 is $2^{n-1}$ since they are clearly in bijection with compositions of $n$ (see also \cite{SS85}). Suppose $\pi = \bigoplus_{i=1}^k \epsilon_{d_k}$. We know that for $\epsilon_d^k = \iota_a\ominus\iota_b$ for some $a+b=d$, and in particular, $\epsilon_{d_i}^k$ avoids $321$ for each $i.$ Since a 321 pattern cannot occur within each block of the form $\epsilon_{d_i}^k$ and cannot occur between blocks in the direct sum, no power of $\pi$ can contain a 321. 
\end{proof}

This furthermore implies that all $\pi\in\S_n$ that avoid $231$ and $321$ also powerfully avoid 321, i.e., they avoid the chain  $(231, 321 : 321^{\infty})$.

The ideas in this section can be extended to include other chains as well. For example, the number of permutations $\pi\in\S_n$ that avoid 231 and 312, and where $\pi^j$ avoids 132 and $\pi^k$ avoids 231 is equal to the number of compositions $(d_1, d_2, \ldots, d_n)$ of $n$ where $d_1$ is a divisor of $j$ or $j-1$ and $d_i$ is both a divisor of $i$ and a divisor of $j$ or $j-1$. This can lead to some interesting solutions, a few of which are demonstrated in Table~\ref{table:231321}.

\section{Concluding Remarks and Open Questions}

A natural next direction is to consider longer patterns. There are many avoidance chains that seem to result in nice answers. For example, the authors conjecture that the number of permutations avoiding the chain $(231, 1432:231)$ (i.e., those that avoid 1432 and strongly avoid 231) is given by $L_{n+1}-\lceil\frac{n}{2}\rceil-1$ where $L_{n+1}$ is the ($n+1$)-st Lucas number (OEIS A000032). It would also be natural to consider consecutive patterns as part of the chain. For example, we conjecture that the number of unimodal permutations of length $n$ whose square avoids the consecutive pattern $\overline{213}$, that is, those that avoid the chain $(213, 312: \overline{213})$, is equal to $2^{n-2}+n-1.$

The notion of chain avoidance generalizes the idea of strong and powerful avoidance from \cite{BS19} and \cite{BD20}, respectively, but also generalizes another notion: pattern avoiding permutations of a given order in the group $\S_n$. 
There are many well known results (see for example, \cite{SS85, DRS07}) involving pattern-avoiding involutions, which in the language of chain avoidance could be described as avoiding the chain $(\tau:21)$ for some $\tau$ or as a pattern-avoiding permutation of order 2, but one could consider higher orders as well. It was posed as an open question in \cite{BS19} to enumerate permutations of order 3 avoiding 132, which can be described as avoiding the chain $(132: \varnothing: 21)$.  Generally, it is an open question to enumerate permutations avoiding a given pattern $\tau$ of a order $k\geq 3$ (corresponding to avoiding the chain $(\tau: \varnothing^{k-2}:21)$) with the exception of $\tau=231$ and $k=3$ \cite{BD20}.



\begin{thebibliography}{99}

\bibitem{BS19} M. B\'{o}na and R. Smith, Pattern avoidance in permutations and their squares. \textit{Discrete Math.} 342(11) (2019), 3194--3200.

\bibitem{BD20} A. Burcroff and C. Defant, Pattern-avoiding permutation powers. \textit{Discrete Math.} 343(11) (2020), 112017.

\bibitem{DRS07} E. Deutsch, A. Robertson, and D. Saracino, Refined restricted involutions. \textit{European J. Combin.} 28(1) (2007),
481--498.

\bibitem{OEIS} OEIS Foundation Inc. (2023), The On-Line Encyclopedia of Integer Sequences, Published electronically at \url{https://oeis.org}.

\bibitem{P23} J. Pan, On a Conjecture About Strong Pattern Avoidance. \textit{Graphs and Combinatorics} 39(2) (2023).

\bibitem{SS85}
R. Simion and F. W. Schmidt, Restricted permutations. \textit{European Journal of Combinatorics} 6(4) (1985), 383--406.

\end{thebibliography}
\end{document}